\newtheorem{theorem}{Theorem}[section]
\newtheorem{lemma}[theorem]{Lemma}
\newtheorem{proposition}[theorem]{Proposition}
\newtheorem{remark}[theorem]{Remark}
\newtheorem{example}[theorem]{Example}
\theoremstyle{definition}
\newtheorem{definition}[theorem]{Definition}
\newcommand\R{\mathbb{R}}
\newcommand\Z{\mathbb{Z}}
\newcommand\T{\mathbb{T}}
\newcommand\C{\mathbb{C}}
\newcommand\N{\mathbb{N}}
\newcommand{\qtq}[1]{\quad\text{#1}\quad}
\newcommand\eps{\varepsilon}
\let\Re=\undefined\DeclareMathOperator{\Re}{Re}
\DeclareMathOperator{\sgn}{sgn}
\newcommand{\del}{\delta}
\newcommand{\mc}[1]{\mathcal{#1}}
\newcommand{\ol}[1]{\overline{#1}}
\newcommand{\wh}[1]{\widehat{#1}}
\newcommand{\wt}[1]{\widetilde{#1}}
\newcommand{\vk}{\varkappa}
\newcommand{\h}{\mathfrak{h}}
\newcommand{\un}{{\bf{1}}}
\newcommand{\dd}{\mathrm{d}}
\numberwithin{equation}{section}
\begin{document}

\title[GWP for the periodic ILW equation]{Global well-posedness for \\ the ILW equation in $H^s(\T)$ for $s>-\frac 12$}

\author[L.~Gassot]{Louise Gassot}
\address{CNRS and Department of Mathematics, University of Rennes, France}
\email{louise.gassot@cnrs.fr}

\author[T.~Laurens]{Thierry Laurens}
\address{Department of Mathematics, University of Wisconsin--Madison, WI, 53706, USA}
\email{laurens@math.wisc.edu}

\begin{abstract}
We prove that the intermediate long wave (ILW) equation is globally well-posed in the Sobolev spaces $H^s(\mathbb{T})$ for $s > -\frac12$.  The previous record for well-posedness was $s\geq 0$, and the system is known to be ill-posed for $s<-\frac12$.  We then demonstrate that the solutions of ILW converge to those of the Benjamin--Ono equation in $H^s(\T)$ in the infinite-depth limit.

Our methods do not rely on the complete integrability of ILW, but rather treat ILW as a perturbation of the Benjamin--Ono equation by a linear term of order zero.  To highlight this, we establish a general well-posedness result for such perturbations, which also applies to the Smith equation for continental-shelf waves.
\end{abstract}

\maketitle

\tableofcontents

\section{Introduction}

The Benjamin--Ono equation
\begin{equation}
\partial_t u = H \partial_x^2 u - 2u\partial_x u
\label{BO}\tag{BO}
\end{equation}
was introduced in~\cites{Benjamin67,Davis67,Ono77} as an asymptotic model for long internal waves in a deep stratified fluid.  Here, $H$ denotes the Hilbert transform, a Fourier multiplier that writes on the torus as
\begin{equation*}
\wh{Hf}(n) = - i \sgn(n) \wh{f}(n),
\quad n\in\Z.
\end{equation*}

The question of well-posedness of \eqref{BO} in the Sobolev spaces $H^s$ has been intensely pursued over the past half century.  Recently, this effort culminated in the works \cites{GerardKappelerTopalov2020,Killip2024} that established well-posedness for all $s>-\frac12$ on both the line and the torus.  In the latter geometry, this result is sharp in view of \cite{GerardKappelerTopalov2020}, which also demonstrates ill-posedness in $H^{-1/2}(\T)$.

\subsection{Zeroth-order perturbations of the Benjamin--Ono equation}

In this paper, we study real-valued solutions of the perturbed Benjamin--Ono equation
\begin{equation}
\partial_t u = H\partial^2_x u - 2u\partial_x u + Au 
\label{ILW 2}
\end{equation}
on the torus, where $A$ is a Fourier multiplier of order zero satisfying the following conditions. 

\begin{definition}[Zeroth-order perturbations]\label{def:A}
We assume that $A$ is an order-zero Fourier multiplier, so that the symbol $a$ of $A$ is of the form
\begin{equation}
\wh{ Af }(n) = a(n) \wh{f}(n) \qtq{with} \|a\|_{\ell^{\infty}}=\sup_{n\in\Z} |a(n)| < \infty .
\label{F}
\end{equation}
We also assume that $Au$ is real-valued when $u$ is real-valued:
\begin{equation}\label{eq:A-real}
a(-n)=\overline{a(n)},
\quad n\in\Z.
\end{equation}
\end{definition}

Assumption~\eqref{eq:A-real} is natural since the solutions of \eqref{BO} and the models presented below are real-valued.  

Our primary example of such a Fourier multiplier is the following.
\begin{example}
The intermediate long wave equation
\begin{equation}
\partial_t u = -\tfrac{1}{\delta} \partial_x u + T_\del \partial_x^2 u - 2u\partial_x u ,
\label{ILW}\tag{ILW}
\end{equation}
was introduced in \cites{Joseph1977,Kubota78} as a model for long internal waves in fluids of finite total depth.  Here, $\del>0$ describes the depth, and
\begin{equation*}
\wh{T_\del f}(n) = - i \coth(\del n) \wh{f}(n) .
\end{equation*}
This can be rewritten in the form~\eqref{ILW 2}, with
\begin{equation}\label{F:ILW}
Af = ( -\del^{-1} \partial_x + T_\del \partial_x^2 - H\partial^2_x ) f .
\end{equation}
After performing the change of unknown
\begin{equation}
v(t,x)
    :=u(t,x+\delta^{-1}t)
\label{boost}
\end{equation}
to remove the $\del^{-1} \partial_x$ term above, this operator $A$ satisfies~\eqref{F}.  (See \eqref{smoothing 2} for details.)
\end{example}

It is well-known that \eqref{ILW} is completely integrable \cites{Kodama1981,Kodama1982}.  Our methods will not employ this special structure however, but rather only the properties listed in Definition~\ref{def:A}, and so they apply to a much broader class of systems.  In particular, this includes the variant ILW-like model introduced in~\cite{Kubota78} that is not known to be integrable, but has a similar operator $A$ that also satisfies Definition~\ref{def:A}.

In fact, the operator $A$ in \eqref{F:ILW} satisfies a much stronger property than what is required by~\eqref{F}: it is smoothing in an $H^s$ sense, as one may readily verify in Fourier variables (see \eqref{smoothing} for details).  This property was first introduced in \cite{IfrimSaut25}*{Lem.~2.2}, and has since been leveraged in the works~\cites{Chapouto2023,Chapouto2024}.  By comparison, our methods will only rely on~\eqref{F}, which merely requires that the operator $A$ is bounded on $H^s$.  Consequently, this allows us to consider more general dispersion relations, such as the following.

\begin{example}
The Smith equation
\begin{equation}\tag{Smith}
   \partial_t u =  \partial_x(1+|D|^2)^{\frac{1}{2}}u - 2u\partial_x u ,
\label{Smith}
\end{equation}
where
\begin{equation*}
    \wh{|D|u}(n)=|n|\wh{u}(n),
\end{equation*}
was introduced in \cite{Smith1972} as a model for continental-shelf waves.
This is an example of an equation of the form~\eqref{ILW 2} with
\begin{equation*}
\widehat{Af}(n)
    =(in \sqrt{1+n^2}-i\sgn(n)n^2)\widehat{f}(n).
\end{equation*}
One can check using Taylor expansion of $\sqrt{1+n^{-2}}$ as $|n|\to +\infty$ that there holds the uniform estimate in $n$,
\begin{equation*}
\left|in \sqrt{1+n^2}-i\sgn(n)n^2\right|
    =\left|n( \sqrt{1+n^2}-|n|)\right|
    \lesssim 1.
\end{equation*}
This equation is not known to be integrable.
\end{example}

Finally, our class of perturbations also includes the Benjamin--Ono equation with Rayleigh dissipation, in which $Au= - u$, but also anti-dissipation terms such as $Au= u$. However, it does not work for more various dissipation terms such as the Reynolds dissipation $Au=-\partial_{xx}u$ (see~\cite{GrimshawSmythStepanyants2018} and references therein for an introduction and numerical study).  Our methods are not restricted to Hamiltonian perturbations of~\eqref{BO}, and could also be extended to the damped Benjamin--Ono equation studied in~\cite{Gassot21-damped} for which the perturbation is $-\langle u,\cos\rangle \cos -\langle u,\sin\rangle \sin$.

\subsection{Main results}

For $s\in\R$, we denote by $H^s(\T)=H^s(\T;\R)$ the space of real-valued functions of class $H^s$.

Our first main result is the well-posedness of equation~\eqref{ILW 2} in $H^s(\T)$ when $s>-\frac 12$.

\begin{theorem}[Global well-posedness]\label{t:wp 2}
Fix $s>-\frac12$.  If the Fourier multiplier $A$ satisfies Definition~\ref{def:A}, then equation~\eqref{ILW 2} is locally well-posed for initial data in $H^s(\T)$.
Moreover, if $-\frac 12<s<0$, equation~\eqref{ILW 2} is globally well-posed  for initial data in $H^s(\T)$.
\end{theorem}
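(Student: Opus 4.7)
The plan is to treat \eqref{ILW 2} as a genuine zeroth-order perturbation of \eqref{BO} and bootstrap from the recent well-posedness of \eqref{BO} in $H^s(\T)$ for $s>-\tfrac12$ established in \cite{Killip2024}.  Writing $Au$ as an external forcing term, I would realize a solution of \eqref{ILW 2} as the fixed point of the map $u\mapsto w$, where $w$ solves the \emph{forced} Benjamin--Ono equation
\begin{equation*}
\partial_t w = H\partial_x^2 w - 2w\partial_x w + Au,\qquad w|_{t=0}=u_0.
\end{equation*}
The key analytic input is a quantitative Lipschitz estimate of the form
\begin{equation*}
\|w_1-w_2\|_{C([0,T];H^s)}
\le C\bigl(T,\|u_0\|_{H^s}\bigr)\,\|f_1-f_2\|_{L^1([0,T];H^s)},
\end{equation*}
comparing two solutions of forced BO sharing initial data $u_0$ but with distinct forcings $f_j$.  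Such a stability bound is the natural analogue for forced BO of the Lipschitz dependence underlying \cite{Killip2024}, and I would derive it by adapting the techniques there to the inhomogeneous setting.

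For local well-posedness, combining this stability bound with the fact that $\|A\|_{H^s\to H^s}=\|a\|_{\ell^\infty}<\infty$ (from Definition~\ref{def:A}) shows that the map $u\mapsto w$ is a contraction on a suitable ball in $C([0,T];H^s)$ for $T=T(\|u_0\|_{H^s},\|a\|_{\ell^\infty})$ sufficiently small; its unique fixed point is the desired solution, and continuous dependence on data follows from the same contraction.  For the global statement in $-\tfrac12<s<0$, I would compare the perturbed solution $u$ to the unperturbed Benjamin--Ono evolution $v(t):=\Phi^{\mathrm{BO}}_t(u_0)$, applying the stability bound with forcings $f_1=Au$ and $f_2=0$ to get
\begin{equation*}
\|u(t)-v(t)\|_{H^s}
\le C\bigl(T,\|u_0\|_{H^s}\bigr)\,\|a\|_{\ell^\infty}\int_0^t\|u(s)\|_{H^s}\,\dd s.
\end{equation*}
Since $v$ has a priori bounded $H^s$-norm on $[0,T]$ by \cite{Killip2024}, Gronwall's lemma yields an a priori bound $\|u(t)\|_{H^s}\le C(T,\|u_0\|_{H^s},\|a\|_{\ell^\infty})$ on any finite interval, precluding finite-time blow-up and upgrading local to global existence.

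The principal obstacle is the forced-BO stability estimate in the regime $s\in(-\tfrac12,0)$.  Because there is no $H^s$-coercive conserved quantity for \eqref{BO} at negative regularity, the standard energy method is unavailable; the low-regularity result of \cite{Killip2024} instead exploits Lax-pair and commuting-flows machinery inherited from complete integrability.  Upgrading this machinery to an equation with external forcing requires quantifying how those conservation laws evolve under the additional input $f(t)$ and carefully tracking the error terms produced, and this is where I expect the bulk of the technical work to lie.
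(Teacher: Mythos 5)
The central claim you would need---a Lipschitz stability estimate for the forced Benjamin--Ono equation of the form
\[
\|w_1-w_2\|_{C([0,T];H^s)}\lesssim \|f_1-f_2\|_{L^1([0,T];H^s)}
\]
at negative regularity $-\tfrac12<s<0$---is not established, and it is in fact the crux of the entire problem; the contraction and Gronwall steps you build on top of it are routine once that estimate is in hand. But such an estimate is blocked by a genuine loss-of-derivatives mechanism. Writing the forced \eqref{BO} evolution in Birkhoff coordinates via \eqref{duhamel}, the difference $\zeta_n(w_1)-\zeta_n(w_2)$ contains the term $(e^{i\Omega_n(t;w_1)}-e^{i\Omega_n(t;w_2)})\zeta_n(u_0)$, and by \eqref{eq:omega_n} the phase gap $|\Omega_n(t;w_1)-\Omega_n(t;w_2)|$ is controlled only through $\|\Phi(w_1)-\Phi(w_2)\|_{\h^{1/2}}$. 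When $s\geq 0$ one has $\h^{s+\frac12}\hookrightarrow\h^{\frac12}$, so the estimate closes at the level of the initial regularity---this is precisely the paper's fixed-point argument in Section~\ref{s:s-pos}. For $s<0$ the embedding goes the wrong way, and the phase comparison forces a drop of one full derivative: the difference can only be closed in $\h^{-1}$, not $\h^{s+\frac12}$. This is the same obstruction that makes the \eqref{BO} flow fail to be uniformly continuous on bounded subsets of $H^s_0(\T)$ for $s<0$~\cite{GerardKappelerTopalov2021}, so an $H^s$-level Lipschitz bound with constants depending only on $\|u_0\|_{H^s}$ is not merely unproven but actually unlikely to hold.

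The paper's proof supplies the missing ingredient: an a-priori \emph{compactness} (not merely boundedness) result for orbits (Theorem~\ref{t:ap}), derived from the one-parameter conservation laws $\beta_s(\kappa;u)$ and the fact that they grow at most exponentially under \eqref{ILW 2}. Relative compactness in $H^s(\T)$ is equivalent to boundedness plus tightness of the Birkhoff coordinates, and this tightness is what allows a split into a low-frequency part---handled by exactly the weak $\h^{-1}$ comparison your stability estimate actually degrades to, see Lemma~\ref{lem:low}---and a high-frequency tail that is uniformly small by equicontinuity. Without that input the $\h^{-1}$ control cannot be upgraded to $\h^{s+\frac12}$, and neither the contraction nor the Gronwall bound closes. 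So the gap is structural rather than bookkeeping: your plan is missing the a-priori compactness theorem and the frequency-splitting it enables, and absent those there is no route to the stability estimate you posit.
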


In the case $A\equiv 0$, this result is sharp in the scope of $H^s(\T)$ spaces in view of the ill-posedness result in~\cite{GerardKappelerTopalov2020}.

\begin{remark}
Using the change of unknowns~\eqref{boost}, we deduce in particular that equation~\eqref{ILW} is globally well-posed for initial data in $H^s(\T)$, $-\frac 12<s<0$.  As we will discuss further below, when combined with prior work this yields global well-posedness for all $s>-\frac12$.  On the other hand, the authors of \cite{Chapouto2024} demonstrated that the equation is ill-posed for $s < -\frac12$.
\end{remark}

\begin{remark}
For~\eqref{ILW 2}, local well-posedness can also be extended to global well-posedness for $s\geq 0$ using the polynomial conserved quantities of~\eqref{BO} and a Gronwall argument.  We will focus on the case $-\frac12 < s < 0$, which for \eqref{ILW} covers all cases not previously known.
\end{remark}

Our second main result is the convergence of solutions of~\eqref{ILW} to a solution of~\eqref{BO} in $H^s(\T)$ as $\delta\to+\infty$.
\begin{theorem}[Convergence of ~\eqref{ILW} to~\eqref{BO}]\label{t:conv}
Fix $-\frac12 < s < 0$.  Given initial data $u(0) \in H^s(\T)$, the global solutions $u^\del(t)$ of \eqref{ILW} converge as $\del\to\infty$ to the global solution $u^\infty(t)$ of \eqref{BO} in $\mc{C}([-T,T];H^s(\T))$ for any $T>0$.
\end{theorem}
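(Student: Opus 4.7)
The plan is to boost to a moving frame in which the perturbation of~\eqref{BO} has exponentially small operator norm on $H^s$, invoke the uniform well-posedness from Theorem~\ref{t:wp 2} together with a density argument to compare the boosted profiles, and then undo the boost.

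Set $v^\del(t,x) := u^\del(t, x+\del^{-1}t)$, so by~\eqref{boost}--\eqref{F:ILW}, $v^\del$ solves~\eqref{ILW 2} with $A = A_\del := T_\del \partial_x^2 - H\partial_x^2$. Its symbol
\[
a_\del(n) = in^2\bigl(\coth(\del n) - \sgn(n)\bigr) = \tfrac{2in^2\,\sgn(n)}{e^{2\del|n|}-1}
\]
satisfies $\|A_\del\|_{H^s\to H^s} = \|a_\del\|_{\ell^\infty} \lesssim e^{-2\del}$ for $\del\ge 1$, since $n^2 e^{-2\del n}$ attains its maximum on $n \ge 1$ at $n=1$. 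Thus Definition~\ref{def:A} holds uniformly in $\del\ge 1$, and Theorem~\ref{t:wp 2} produces global solutions $v^\del$ and (taking $A \equiv 0$) $u^\infty$ in $\mathcal{C}(\R;H^s)$ whose $L^\infty([-T,T];H^s)$ norms are bounded by a constant depending only on $T$ and $\|u(0)\|_{H^s}$, uniformly in $\del\ge 1$.

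I then show $v^\del \to u^\infty$ in $\mathcal{C}([-T,T];H^s)$ by a two-step density argument. For smooth data $u(0)\in H^\sigma$ with $\sigma > \tfrac32$, standard energy methods (combined with Theorem~\ref{t:wp 2} and the conserved-quantity argument from the second Remark) yield uniform-in-$\del$ bounds on $v^\del,u^\infty$ in $L^\infty_T H^\sigma$; the difference $w^\del := v^\del - u^\infty$ satisfies $w^\del(0)=0$ and
\[
\partial_t w^\del = H\partial_x^2 w^\del - 2 w^\del \partial_x v^\del - 2 u^\infty \partial_x w^\del + A_\del v^\del,
\]
so a standard $H^\sigma$ energy estimate, using $\|A_\del v^\del\|_{H^\sigma}\lesssim e^{-2\del}\|v^\del\|_{H^\sigma}$, followed by Gronwall gives $\|w^\del\|_{L^\infty_T H^\sigma}\lesssim e^{-2\del}\to 0$, hence also in $H^s$. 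For rough data $u(0)\in H^s$, approximate by smooth $u_k(0)\to u(0)$ in $H^s$, denote the corresponding solutions by $v^\del_k, u^\infty_k$, and apply the triangle inequality $\|v^\del-u^\infty\|\le \|v^\del-v^\del_k\|+\|v^\del_k-u^\infty_k\|+\|u^\infty_k-u^\infty\|$; sending $\del\to\infty$ kills the middle term by the smooth-data step, and then sending $k\to\infty$ kills the outer terms by continuous dependence (applied uniformly in~$\del$).

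Finally, to undo the boost, write $\tau_h f(x) := f(x-h)$. Then $u^\del(t) = \tau_{-\del^{-1}t} v^\del(t)$, and
\[
u^\del(t) - u^\infty(t) = \tau_{-\del^{-1}t}\bigl(v^\del(t)-u^\infty(t)\bigr) + \bigl(\tau_{-\del^{-1}t}u^\infty(t) - u^\infty(t)\bigr).
\]
The first term vanishes uniformly on $[-T,T]$ by the previous step since translation is an $H^s$-isometry; the second vanishes uniformly because $u^\infty([-T,T])$ is compact in $H^s$, translations act strongly continuously on $H^s$, and $|\del^{-1}t|\le T/\del\to 0$. The most delicate point is the \emph{uniform-in-$\del$} continuous dependence used in the density step: it requires that the modulus of continuity for the flow map from Theorem~\ref{t:wp 2} depends on $A$ only through $\|a\|_{\ell^\infty}$. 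This should follow directly from the proof of Theorem~\ref{t:wp 2}, in which $Au$ is handled as a bounded perturbation, but this is the step that must be extracted and verified explicitly.
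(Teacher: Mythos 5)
Your approach is genuinely different from the paper's. The paper goes directly to Birkhoff coordinates: it proves tightness of $(\Phi(u^\del))_{\del\geq1}$ in $\h^{s+\frac12}$ uniformly for $|t|\leq T$ (via Theorem~\ref{t:ap ILW}), proves convergence $\Phi(v^\del)\to\Phi(u^\infty)$ in the \emph{weaker} norm $\mc{C}_T\h^{-1}$ from the Duhamel formula \eqref{duhamel} and the smoothing bound \eqref{smoothing 2}, and then upgrades via tightness. There is no density argument and no energy estimate. By contrast, you propose to compare smooth solutions by $H^\sigma$ energy methods (where the exponential smallness of $\|a_\del\|_{\ell^\infty}$ is leveraged quantitatively) and then pass to rough data by density using uniform-in-$\del$ continuous dependence. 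Your argument does buy a rate $O(e^{-2\del})$ for smooth data, which the paper does not state; at the level of $H^s$ data, however, both proofs give only qualitative convergence, since your density step destroys the rate.

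The gap you flag at the end is genuine and is not a minor extraction. Theorem~\ref{t:wp 2} as stated is for a fixed $A$ and gives no quantitative modulus of continuity for the solution map, let alone one uniform over a family of operators. Making the density step work requires showing that the constants in Lemma~\ref{lem:low} and in the proof of Theorem~\ref{t:lwp-precise} -- including the choice of the truncation level $N$ and the local time $T_0$ -- depend on $A$ only through $\|a\|_{\ell^\infty}$ and on the compactness data of the orbit set. The latter is supplied uniformly in $\del$ by Theorem~\ref{t:ap ILW}, so this is plausible, but verifying it amounts to re-running the entire Section~\ref{s:WP} argument with the $\del$-dependence tracked throughout. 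That is comparable in length and content to the paper's direct Birkhoff-coordinate proof, so the proposal does not produce a shorter route given what the paper already establishes. A smaller remark: the order of limits in your triangle inequality should be $k$ first (using the asserted uniform-in-$\del$ dependence to bound $\sup_\del\|v^\del-v^\del_k\|$), then $\del$, rather than the order written; the argument is salvageable as a $\limsup_\del$ estimate, but the phrasing should be tightened. Your boost-removal step, via compactness of $u^\infty([-T,T])$ and strong continuity of translations, is correct and is essentially the same mechanism as the paper's corresponding lemma.
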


As in the case of our well-posedness result for \eqref{ILW}, this provides the last remaining piece in order to account for the full range $s>-\frac12$ of the sharp well-posedness theory for \eqref{BO}.

\subsection{Strategy of proof}

The first ingredient of our proof is the Birkhoff map: a nonlinear Fourier transform that sends the Benjamin--Ono dynamics into an infinite system of linear ODEs.  We introduce this transformation in Section~\ref{s:Birkhoff}.
This Birkhoff map was constructed in~\cite{GerardKappeler2021} at regularity $L^2_0(\T)$, then extended to $H^s_0(\T)$ for $-\frac 12<s<0$ in~\cite{GerardKappelerTopalov2020}.  Here, $H^s_0(\T)$ denotes the subspace of $H^s(\T)$ consisting of functions with zero mean.  In general, the mean value of solutions is not preserved by the flow of \eqref{ILW 2}, but we explain in Section~\ref{s:Birkhoff} how to enforce this condition through the change of unknown~\eqref{boost 2}.  Thanks to analyticity properties of the Birkhoff map~\cite{GerardKappelerTopalov2021analyticity}, we are then able to write a Duhamel formula \eqref{duhamel} in Birkhoff coordinates that expresses solutions to \eqref{ILW 2} as perturbations of the \eqref{BO} flow.

In Section~\ref{s:s-pos}, we prove that equation \eqref{ILW 2} is locally well-posed in $H^s_0(\T)$ for $s\geq 0$ by implementing a fixed-point argument based on this Duhamel formulation.  It turns out that in the Birkhoff coordinates, the perturbation is bounded and Lipschitz on finite balls corresponding to $L^2_0(\T)$ regularity, allowing for a fixed-point argument.  A similar strategy was successfully implemented in~\cite{Gassot21-damped} for a Benjamin--Ono equation with a damping term on the first Fourier modes.   In~\cite{BambusiGerard24}, a Nekhoroshev-type theorem was proved for a large class of small Hamiltonian perturbations of the Benjamin--Ono equation, by using crucially the analyticity of the Birkhoff map in the energy space $H^{1/2}_0(\T)$.

However, such strategies become much more difficult to implement in $H^s_0(\T)$ for $s<0$, due to the lack of uniform continuity of the flow map of the Benjamin--Ono equation as such low regularities~\cite{GerardKappelerTopalov2021}. Note that even though it was shown that the flow map of~\eqref{BO} is never uniformly continuous on bounded subsets of $H^s(\R)$ when $s>0$~\cite{KochTzvetkov05}, the solution map is actually analytic on $H^s_0(\T)$ when $s\geq 0$~\cite{GerardKappelerTopalov2021analyticity}.  However, uniform continuity of the \eqref{BO} flow on bounded subsets of $H^s_0(\T)$ fails for $s<0$~\cite{GerardKappelerTopalov2021}.

In order to go beyond $s\geq 0$, we introduce our second main ingredient in Section~\ref{s:a-priori}: we show that for $-\frac12<s<0$, the orbits of~\eqref{ILW 2} on bounded time intervals are a-priori relatively compact in $H^s(\T)$.  By the Arzel\`a--Ascoli theorem, boundedness and equicontinuity (see \eqref{equicty 2} for details) pose necessary and sufficient conditions for a set to be relatively compact in $H^s(\T)$.  We verify that orbits satisfy both of these criteria using a one-parameter family of quantities $\beta(\kappa;u)$ defined in \eqref{beta} that are conserved by the \eqref{BO} flow.  This same quantity was employed in \cite{Killip2024} as the starting point for the method of commuting flows in order to prove Theorem~\ref{t:wp 2} in the case $A\equiv 0$ on both the line and the torus.

In the general case $A\not\equiv 0$, we prove that these quantities can grow at most exponentially in time.  For \eqref{ILW}, this result was recently established in \cite{Chapouto2024}.  However, their argument relies on smoothing properties for the operator $A + \del^{-1}\partial_x$, and so does not apply to other models such as the \eqref{Smith} equation.

In Section~\ref{s:WP}, we prove that equation \eqref{ILW 2} is globally well-posed in $H^s_0(\T)$ for $-\frac12<s<0$.  The central question we need to address is: Does any sequence of smooth initial data $u_j(0)$ that converges in $H^s(\T)$ lead to a sequence of solutions $u_j(t)$ that converges in $H^s(\T)$ uniformly on bounded time intervals $I$?  Courtesy of our a-priori results, we now know that the Birkhoff coordinates present a bi-Lipschitz change of variables on the set $\{ u_j(t) : j\geq 1,\ t\in I\}$.  This provides the starting point for our fixed-point argument.

However, the simple fixed-point argument from Section~\ref{s:s-pos} in the case $s\geq 0$ breaks down for $s<0$.  Instead, in order to close our argument, we need to estimate the difference of solutions in a space that is one degree of regularity lower than that of the initial data (see Lemma~\ref{lem:low} for details).  Nevertheless, we are then able to upgrade this convergence to the necessary regularity by a second application of our a-priori equicontinuity result, which corresponds to tightness of the sequence of Birkhoff coordinates.

Finally, in Section~\ref{s:ILW} we focus our attention on \eqref{ILW} and prove Theorem~\ref{t:conv} regarding the infinite-depth limit of solutions.  Our starting point is again the Duhamel formula~\eqref{duhamel} in Birkhoff coordinates, which expresses \eqref{ILW} as a $\del$-dependent perturbation of the \eqref{BO} flow.  In comparing the difference of solutions to \eqref{ILW} and \eqref{BO}, we encounter a loss-of-derivatives phenomenon analogous to the one in Section~\ref{s:WP}.  This issue is also resolved by equicontinuity, because our a-priori estimate for $\beta(\kappa;u)$ holds uniformly in the limit $\del\to\infty$.

\subsection{Perturbations of the Benjamin--Ono equation}
Let us now review some known results about the dynamics of perturbed Benjamin--Ono equations.

In~\cite{Saut79} and~\cite{AbdelouhabBFS89}, the authors show that equations~\eqref{ILW} and~\eqref{Smith} are globally well-posed in $H^s(\R)$ for $s>\frac32$; moreover, solutions to~\eqref{ILW} converge to those of~\eqref{BO} as the depth parameter goes to infinity. These results also extend to the periodic case. It is also known that the flow map in $H^s(\R)$ cannot be of class $\mathcal{C}^2$ for these equations~\cite{MolinetSautTzvetkov01}.  For both \eqref{ILW} and \eqref{Smith}, the global well-posedness threshold was then lowered to the energy space $s=\frac12$~\cite{MolinetVento15}, the result being valid on the line and on the torus. Excluding the endpoint case, convergence for \eqref{ILW} in the infinite-depth limit was established in \cite{Li2024} for $s>\frac12$.  Recently, in~\cite{IfrimSaut25}, global well-posedness for equation~\eqref{ILW} was proved in $L^2(\R)$ along with dispersive estimates.
In~\cite{Chapouto2023}, global well-posedness for equation~\eqref{ILW}  was proved in $L^2(\T)$, as well as the convergence of the~\eqref{ILW} dynamics to the~\eqref{BO} dynamics, by treating~\eqref{ILW} as a perturbation of~\eqref{BO}. The equation is also known to be ill-posed when $s<-\frac 12$ on both the line and the torus~\cite{Chapouto2024}. Concerning the long-time dynamics, there is numerical evidence that the soliton resolution conjecture holds for equations~\eqref{Smith} and~\eqref{ILW}; see~\cite{KleinSautBook} and references therein.

In the periodic Benjamin--Ono equation with a damping term on its first Fourier modes, it was possible to show that the Sobolev norms of the trajectories associated to initial data in $L^2(\T)$ stay bounded globally in time in the higher-order Sobolev spaces $H^s(\T)$ for $0\leq s<3/2$~\cite{Gassot21-damped}. In contrast, such a damping term was initially introduced in~\cite{GerardGrellier19} for the integrable Szeg\H{o} equation, where the authors could prove growth of Sobolev norms in infinite time.

Let us also mention how extensions of the KAM theory have been implemented to study  small perturbations of integrable equations.
In~\cite{LiuYuan2011} and~\cite{MiZhang2014}, the authors show the existence of quasi-periodic trajectories for unbounded Hamiltonian perturbations of~\eqref{BO}. For a cubic Benjamin--Ono equation, it was shown in~\cite{Baldi2013} that there exist periodic solutions to some non-Hamiltonian reversible perturbations.  Approximations of trajectories on large time scales were derived in~\cite{BernierGrebert2020}  for Benjamin--Ono equations with general nonlinearities with generic small initial data. 
In this direction, the Nekhoroshev theorem presented in~\cite{BambusiGerard24} also provides information on the long time dynamics by relying on the Birkhoff coordinates. The authors show that the actions of the Benjamin--Ono equations (given by the square modulus of the Birkhoff coordinates) remain close to their initial values for times wich are exponentially long with a power $\eps^{-1}$ if the perturbation is of size~$\eps$. 

Concerning equation~\eqref{ILW 2}, we only need to derive an exponential a-priori bound on the $H^s$ norm of solutions in order to prove our global well-posedness result. The growth of Sobolev norms for the trajectories of~\eqref{ILW 2}, or more generally, the qualitative long-time behavior of solutions, therefore remains an interesting open question. 

In the context of the Korteweg--de Vries (KdV) equation, the second author established an exponential bound in \cite{Laurens2023} as a stepping stone towards global well-posedness for $H^{-1}(\R)$ perturbations $u$ of a bounded solution $V$ of KdV.  This matches the regularity for the sharp well-posedness theory for the KdV equation.  Although the combined waveform $u+V$ is a solution to KdV, the discrepancy between the boundary conditions of $V$ as $x\to\pm\infty$ can drive the growth of the $H^s$ norm of solutions.  

Recently, the work \cite{Ifrim2025} demonstrated a-priori estimates in $H^{-1}(\R)$ for a family of non-integrable perturbations of the KdV equation.  These estimates are local in time, but they allow for a much broader class of perturbations, including linear terms up to second order as well as quadratic terms in the PDE.

\subsection{Acknowledgments}

The authors would like to thank P. Gérard and D. Pilod for interesting discussions.

L. Gassot was supported by the France 2030 framework program, the Centre Henri Lebesgue ANR-11-LABX-0020-01, the ANR project HEAD--ANR-24-CE40-3260, and the AIS Rennes Métropole.
 T.~Laurens was supported by NSF CAREER grant DMS-1845037.

\section{Birkhoff coordinates}\label{s:Birkhoff}

\subsection{Properties of the Birkhoff map}

Our proof relies on the existence of Birkhoff coordinates for equation~\eqref{BO}. For $s>-\frac 12$, the Birkhoff map is denoted
\begin{equation*}
\Phi:u\in H^s_0(\T)\mapsto (\zeta_n(u))_{n\geq 1}\in \h^{s+\frac 12},
\end{equation*}
where $H^s_0(\T)$ is the subspace of functions of $H^s(\T)$ with zero mean, and
\begin{equation*}
\h^{ s+\frac 12}
    = \bigg\{ (z_n)_{n\geq 1}\in \C^\N : \|z_n\|_{\h^{s+\frac12}_n}^2 = \sum_{n\geq 1} n^{2s+1}|z_n|^2<+\infty \bigg\}.
\end{equation*}
As we will discuss below, this mapping is continuously differentiable.  We denote its functional derivative by
\begin{equation*}
\dd_u \Phi[f] = \frac{\dd}{\dd s} \Phi( u + sf) \bigg|_{s=0} .
\end{equation*}

According to~\cites{GerardKappeler2021}, we know that if $u$ is an $L^2$ solution to \eqref{BO}, then for every $n\geq 1$,
\begin{equation}
\frac{\dd}{\dd t} \zeta_n(u(t)) = i\omega_n(u(t))\zeta_n(u(t)) ,
\label{zeta dot 3}
\end{equation}
where
\begin{equation}\label{eq:omega_n}
\omega_n(u) = n^2 - 2\sum_{k\geq 0} \min\{ k,n\} |\zeta_k(u)|^2.
\end{equation}
Moreover, the flow defined by~\eqref{BO} for smooth initial data extends continuously to $H^s(\T)$ when $s>-\frac 12$, and this extension is a solution to~\eqref{zeta dot 3} in Birkhoff coordinates~\cite{GerardKappelerTopalov2020}.  

We will use the following properties of the Birkhoff map.
\begin{proposition}[\cite{GerardKappelerTopalov2020}*{Theorem 6} and~\cite{GerardKappelerTopalov2021analyticity}]\label{t:birkhoff-bdd}
For any $s > -\frac12$, the Birkhoff map sends bounded subsets of $H^s_0(\T)$ to bounded subsets of $\h^{s+\frac 12}$, and similarly for the inverse map.

Moreover, the Birkhoff map is a real-analytic diffeomorphism. In particular, for every $u\in H^s_0(\T)$, there is $C>0$ and a neighborhood $V$ of $u$ in $H^s_0(\T)$ such that for every $\wt{u}\in V$ and $f\in H^s(\T)$,
\begin{align}
\| \dd_u \Phi [f] \|_{\h^{s+\frac12}} &\leq C \| f \|_{H^s} ,
\label{dzeta} \\
\| \dd_u\Phi[f] - \dd_{\wt{u}}\Phi[f]  \|_{\h^{s+\frac12}} &\leq C\| u - \wt{u} \|_{H^s} \| f \|_{H^s}.
\label{dzeta 2}
\end{align}
Similar estimates hold for the inverse map $\Phi^{-1}$.
\end{proposition}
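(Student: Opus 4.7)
The statement is a consequence of two prior results that are directly cited, so my plan is largely to extract exactly what we need from each reference and then derive the two quantitative estimates \eqref{dzeta}--\eqref{dzeta 2} from the real-analyticity via standard Cauchy-type considerations.

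First, I would invoke \cite{GerardKappelerTopalov2020}*{Theorem 6}, which asserts precisely that $\Phi\colon H^s_0(\T)\to \h^{s+\frac12}$ is a homeomorphism that maps bounded sets to bounded sets, and whose inverse has the same property. This settles the first paragraph of the proposition for both $\Phi$ and $\Phi^{-1}$ with no further work.

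Next, I would quote from \cite{GerardKappelerTopalov2021analyticity} that $\Phi$ is a real-analytic diffeomorphism from $H^s_0(\T)$ onto $\h^{s+\frac12}$. Concretely, at every $u\in H^s_0(\T)$ the map $\Phi$ admits a holomorphic extension to a complex ball $B_{\C}(u,r)\subset H^s_0(\T;\C)$ for some $r>0$, and this extension is bounded in $\h^{s+\frac12}$ on $B_{\C}(u,r/2)$ by some constant $M$ depending on $u$. From here, the two quantitative estimates follow from Cauchy's integral formula applied to the one-variable holomorphic functions $s\mapsto \Phi(u+sf)$ and $s\mapsto \dd_{u+s(\wt u - u)}\Phi[f]$. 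The first gives the operator-norm bound $\|\dd_u\Phi[f]\|_{\h^{s+\frac12}}\lesssim M\|f\|_{H^s}$ on a possibly smaller neighborhood, establishing \eqref{dzeta} with some constant $C$. Differentiating once more and applying Cauchy's bound to the second derivative gives a local bound on $\dd^2_u\Phi$, and then the mean value inequality applied to $u\mapsto \dd_u\Phi[f]$ along the segment from $u$ to $\wt u$ yields \eqref{dzeta 2} (after shrinking the neighborhood $V$ so that the segment stays within the domain of analyticity).

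Finally, the inverse map $\Phi^{-1}\colon \h^{s+\frac12}\to H^s_0(\T)$ is also real-analytic by \cite{GerardKappelerTopalov2021analyticity}, so repeating the Cauchy-estimate argument verbatim with $\Phi$ replaced by $\Phi^{-1}$ produces the analogous bounds on $\dd\Phi^{-1}$ and its local Lipschitz behavior. I do not expect any real obstacle in carrying this out: the essential content is already present in the two quoted theorems, and everything else is the routine passage from analyticity to Cauchy estimates on the first two derivatives. The only mild bookkeeping point is to ensure that the neighborhood $V$ and the constant $C$ can be chosen uniformly in $f$, which is automatic because the estimates are linear (respectively bilinear) in $f$.
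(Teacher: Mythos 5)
Your proposal is correct and follows exactly the intended logic: the paper gives no explicit proof for this proposition, treating it as a direct import from the two cited references, and your passage from the stated real-analyticity to the local bounds \eqref{dzeta} and \eqref{dzeta 2} via Cauchy estimates on the first two derivatives is precisely the standard argument the authors leave implicit. No gap; the only cosmetic quibble is your reuse of the symbol $s$ for the complex parameter when it already denotes the Sobolev index.
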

In particular, \eqref{dzeta} and \eqref{dzeta 2} hold uniformly on compact sets of $u,\wt{u}\in H^s_0(\T)$.

\subsection{Zero-mean condition}

As the Birkhoff map is defined on the subspace $H^s_0(\T)$ of elements of $H^s(\T)$ with zero mean, we will now explain how to enforce this condition for solutions to equation~\eqref{ILW 2}. 

 Let us first assume that $a(0)=0$.   In this case, the mean value $\frac{1}{2\pi}\int_0^{2\pi} u\,\dd x$ is a conserved quantity for the equation~\eqref{ILW 2}, and so it is always possible to enforce the condition that the solution takes its values in $H^s_0(\T)$ thanks to the Galilei symmetry
\[
\widetilde{u}(t,x)=u(t,x-2ct)+c.
\]

Let us now assume that $a(0)\neq 0$. Consider the change of unknowns
\begin{equation}
v(t,x)=u(t,x-2d(t))+c(t),
\label{boost 2}
\end{equation}
where we choose $c(t)$ and $d(t)$ so that $v(0)=u(0)$, and so that $v$ has mean zero. Then $v$ solves the following equation:
\begin{align*}
\partial_t v
	&= \partial_x(H\partial_x (v-c)-(v-c)^2)+Av-Ac(t)-2d'(t)\partial_x v+c'(t)\\
	&=\partial_x(H\partial_x v-v^2)+Av +2(c(t)-d'(t))\partial_x v -a(0)c(t)+c'(t).
\end{align*}
We choose $c$ and $d$ so that 
\[
-a(0)c(t)+c'(t)=0
\quad\text{and}\quad d'(t)=c(t),
\]
with the initial condition $c(0)=-\frac{1}{2\pi}\int_0^{2\pi} u$ and $d(0)=0$.  We find $c(t)=c(0)e^{a(0)t}$, and $d(t)=\frac{c(0)}{a(0)}(e^{a(0)t}-1)$.

Then, the unknown $v$ is a solution to~\eqref{ILW 2} with zero mean. It is therefore enough to check that well-posedness holds on $H^s_0(\T)$ for equation~\eqref{ILW 2}.

\subsection{Duhamel formula}

Given a smooth solution $u$ to \eqref{ILW 2} in $H^s_0(\T)$ for some $s>-\frac 12$, we will write a Duhamel formula adapted to the Birkhoff coordinates.  By the chain rule, given that the time-evolution for the Benjamin--Ono part of the equation is given by \eqref{zeta dot 3}, we see that 
\begin{equation}
\frac{\dd}{\dd t} \zeta_n(u(t)) = \dd_{u(t)}\zeta_n\,[\partial_t u(t)] =  i\omega_n(u(t)) \zeta_n(u(t)) + \dd_{u(t)}\zeta_n\,[ Au(t)] .
\label{zeta dot}
\end{equation}
Setting
\begin{equation*}
\Omega_n(t;u) = \int_0^t \omega_n(u(\tau))\,\dd\tau ,
\end{equation*}
it follows that
\begin{equation*}
\frac{\dd}{\dd t} ( e^{-i\Omega_n}  \zeta_n ) = e^{-i\Omega_n} \dd_u\zeta_n [Au] ,
\end{equation*}
and so
\begin{equation}
\zeta_n(u(t)) = e^{i\Omega_n(t;u)} \zeta_n(u(0)) +  \int_0^t e^{i[\Omega_n(t;u) - \Omega_n(\tau;u)]} \dd_{u(\tau)}\zeta_n [Au(\tau)]\, \dd\tau .
\label{duhamel}
\end{equation}

\section{Local well-posedness for $s\geq 0$}\label{s:s-pos}

Let us first provide a simple proof of local well-posedness in $H^s_0(\T)$ when $s\geq 0$ using a fixed-point argument.

Let $u_0\in H^s_0(\T)$. We denote by $u^{\infty}(t)$ the Benjamin--Ono evolution associated to the initial data $u_0$: for every $n\geq 1$, and $t\in\R$,
\begin{equation*}
\zeta_n(u^{\infty}(t))=e^{it\omega_n(u_0)}\zeta_n(u_0).
\end{equation*}

 We show that if $T$ is small enough, then there is a constant $R>0$ such that the Duhamel map
\begin{align*}
u
    &\mapsto \Phi^{-1} \left(e^{i\Omega_n(t;u)} \zeta_n(u_0) +  \int_0^t e^{i[\Omega_n(t;u) - \Omega_n(\tau;u)]} \dd_{u(\tau)}\zeta_n [Au(\tau)]\, \dd\tau\right) 
\end{align*}
sends the ball $B_{T,R}=\{u\in \mc{C}([-T,T]; H^s_0(\T)) : \|u-u^{\infty}\|_{\mc{C}_TH^s}< R\}$ to itself and is a contraction on this ball. We use the short notation
\[
\|u\|_{\mc{C}_TH^s}:=\|u\|_{\mc{C}([-T,T];H^s(\T))} = \sup_{t\in [-T,T]} \|u(t)\|_{H^s} .
\]

Since the set of trajectories $\{u^{\infty}(t) : t\in[-T,T]\}$ is a compact set in $H^s_0(\T)$,  we assume up to taking $R$ small enough that the bounds in Proposition~\ref{t:birkhoff-bdd}, in particular~\eqref{dzeta} and~\eqref{dzeta 2}, hold uniformly on this ball.

Then we estimate
\begin{multline*}
\|\zeta_n(u(t))-\zeta_n(u^\infty(t))\|_{\mc{C}_T\h^{s+\frac12}}\\
    \leq \|(e^{i\Omega_n(t;u)}-e^{it\omega_n(u_0)})\zeta_n(u_0)\|_{\mc{C}_T\h^{s+\frac12}}
    +T \|\dd_{u(\tau)}\zeta_n[Au(\tau)]\|_{\mc{C}_T \h^{s+\frac 12}}.
\end{multline*}
Using $t\omega_n(u_0)=\Omega_n(t;u^{\infty})$, \eqref{eq:omega_n}, and \eqref{dzeta}, we note that
\begin{align}
|\Omega_n(t;u)-t\omega_n(u_0)|
    &\leq T \bigg\| \sum_{k\geq 1} k \Big| |\zeta_k(u(t))|^2-|\zeta_k(u^{\infty}(t))|^2 \Big| \bigg\|_{\mc{C}_T}
    \\
    &\leq CT\|\Phi(u) -\Phi(u^{\infty})\|_{\mc{C}_T\h^{\frac 12}}\\
    &\leq CT\|u -u^{\infty}\|_{\mc{C}_TH^s}.
    \label{eq:omega-diff}
\end{align}
Moreover, by \eqref{dzeta} and \eqref{F} we have
\begin{equation*}
\|\dd_{u(\tau)}\zeta_n[Au(\tau)]\|_{\mc{C}_T \h^{s+\frac 12}}
    \leq C\|Au\|_{\mc{C}_TH^s}
    \leq C\|u\|_{\mc{C}_TH^s}.
\end{equation*}
Combining the previous two estimates with uniform bounds on the differential of the inverse map $\Phi^{-1}$, we find that for $T$ small enough,
\begin{align*}
\|u-u^{\infty}\|_{\mc{C}_TH^s}
    &\leq C\|\Phi(u)-\Phi(u^{\infty})\|_{\mc{C}_T\h^{s+\frac 12}}\\
    &\leq C'T \|u -u^{\infty}\|_{\mc{C}_TH^s}
    +C'T \|u\|_{\mc{C}_TH^s}.
\end{align*}
We conclude that the Duhamel map sends the ball $B_{T,R}$ to itself if $T$ is chosen small enough. A similar argument shows that the Duhamel map is a contraction on this ball. 

Note that estimate~\eqref{eq:omega-diff} was made possible thanks to the fact that $\|\Phi(u) -\Phi(u^{\infty})\|_{\mc{C}_T\h^{\frac 12}}$ is controlled by $\|\Phi(u) -\Phi(u^{\infty})\|_{\mc{C}_T\h^{s+\frac 12}}$ when $s\geq 0$. However, this fails when $s<0$.

\section{Relative compactness of trajectories}\label{s:a-priori}

In this section, we show the following a-priori properties on the trajectories of~\eqref{ILW 2}.

\begin{theorem}\label{t:ap}
If $\mc{F}\subset H^\infty$ is a set of smooth initial data that is relatively compact in $H^s(\T)$, then for any $T>0$ the set of orbits under \eqref{ILW 2}:
\begin{equation}
\mc{F}^*_T = \{ u(t) : u(0)\in \mc{F},\ t\in[-T,T] \}
\label{F*}
\end{equation}
is also relatively compact in $H^s(\T)$.
\end{theorem}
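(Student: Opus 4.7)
The plan is to apply the Arzelà--Ascoli characterization of relatively compact subsets of $H^s(\T)$: $\mc{F}^*_T$ is relatively compact in $H^s(\T)$ if and only if it is bounded and Fourier-tail equicontinuous, i.e.\ $\lim_{N\to\infty} \sup_{v\in \mc{F}^*_T} \|P_{>N} v\|_{H^s} = 0$. Both conditions will be extracted from uniform-in-$\kappa$ control of the one-parameter family of quantities $\beta(\kappa; u)$ from~\eqref{beta}. These are exactly preserved by the \eqref{BO} flow and, through their dependence on $\kappa$, simultaneously encode the $H^s$ norm of $u$ (at bounded $\kappa$) and the Fourier-tail size of $u$ (as $\kappa\to\infty$), which is precisely what is needed to feed the Arzelà--Ascoli criterion.

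Since the elements of $\mc{F}^*_T$ are smooth, I would differentiate $\beta(\kappa; u(t))$ along a solution of~\eqref{ILW 2}. The BO contribution to the time derivative vanishes by the conservation property, leaving
\begin{equation*}
\frac{\dd}{\dd t}\beta(\kappa; u(t)) = \langle \nabla_u \beta(\kappa; u(t)),\, Au(t)\rangle.
\end{equation*}
The central step would be to establish a Gronwall-ready estimate of the form
\begin{equation*}
\left| \tfrac{\dd}{\dd t}\beta(\kappa; u(t)) \right| \leq C\, \beta(\kappa; u(t))
\end{equation*}
uniformly in $\kappa \geq \kappa_0$, where $C$ and $\kappa_0$ depend only on $\|u(0)\|_{H^s}$ and on $\|a\|_{\ell^\infty}$. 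Integration then yields $\beta(\kappa; u(t)) \leq e^{C|t|} \beta(\kappa; u(0))$ for all $t\in[-T,T]$, uniformly in $\kappa\geq \kappa_0$ and over all $u(0)\in\mc{F}$.

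With this propagation in hand, the theorem follows quickly. Since $\mc{F}$ is relatively compact in $H^s(\T)$, it is both bounded and Fourier-tail equicontinuous, so $\sup_{u(0)\in\mc{F}}\beta(\kappa_0; u(0)) < \infty$ and $\sup_{u(0)\in\mc{F}}\beta(\kappa; u(0)) \to 0$ as $\kappa\to\infty$. The exponential bound transfers both properties to $\mc{F}^*_T$, yielding boundedness and equicontinuity of the orbit set, hence relative compactness in $H^s(\T)$.

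The main obstacle is the differential inequality on $\beta$. For \eqref{ILW} such a bound was obtained in~\cite{Chapouto2024} by crucially exploiting that $A = -\del^{-1}\partial_x + T_\del\partial_x^2 - H\partial_x^2$ is smoothing; here I cannot afford any gain in derivatives, only the boundedness on $H^s$ granted by~\eqref{F}. The strategy would be to use a resolvent or perturbation-determinant identity for $\nabla_u \beta(\kappa; u)$ in the spirit of the method of commuting flows, producing a representation that places $\nabla_u\beta$ in a Sobolev dual space with norm controlled by $\beta(\kappa; u)$ itself; then duality pairing with $Au$, followed by~\eqref{F}, should close the estimate without appealing to any smoothing. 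Verifying that this pairing is well-defined at Fourier-tail scale $\kappa$ and that the constant $C$ depends only on $\|u(0)\|_{H^s}$ and $\|a\|_{\ell^\infty}$ is where the technical work will concentrate.
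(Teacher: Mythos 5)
Your outline correctly identifies the Arzel\`a--Ascoli criterion, the idea of differentiating $\beta(\kappa;u)$ along the flow and pairing $\nabla_u\beta$ against $Au$, and the hope of a Gronwall estimate uniform in $\kappa$; these match the skeleton of the paper's argument. However, there is a genuine gap in the framework that prevents the argument from closing at $H^s$ regularity.

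You propose to run Gronwall on the pointwise quantity $\beta(\kappa;u)$, aiming for $\big|\tfrac{\dd}{\dd t}\beta(\kappa;u(t))\big|\leq C\,\beta(\kappa;u(t))$, and then to read off both boundedness and equicontinuity of the orbit set from uniform control of $\kappa\mapsto\beta(\kappa;u)$. The problem is that at fixed $\kappa$, $\beta(\kappa;u)$ scales like $\|u\|_{H^{-1/2}_\kappa}^2$, not like $\|u\|_{H^s}^2$. What the paper actually establishes is $\big|\tfrac{\dd}{\dd t}\beta(\kappa;u)\big|\lesssim_s\|u\|_{H^{-1/2}_\kappa}^2$, which is not the same as $\lesssim\beta(\kappa;u)$ without a separate pointwise lower bound $\|u\|_{H^{-1/2}_\kappa}^2\lesssim\beta(\kappa;u)$; that lower bound is not stated in the cited literature, and proving it is an extra step you would have to supply. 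More fundamentally, even if you had it, uniform decay of $\beta(\kappa;u)$ as $\kappa\to\infty$ over a family is strictly weaker than $H^s$-equicontinuity. For example, if $u_n$ is the real-valued trigonometric polynomial with Fourier mass $n^{-s}$ at frequencies $\pm n$, then $\sup_n\beta(\kappa;u_n)\sim\kappa^{-(2s+1)}\to 0$ while $\{u_n\}$ is bounded but not equicontinuous in $H^s$. So the quantity $\beta(\kappa;u)$ alone cannot feed the Arzel\`a--Ascoli criterion at regularity $s$.

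The missing ingredient is the $\kappa$-integration: the paper introduces $\beta_s(\kappa;u)=\int_\kappa^\infty\beta(\varkappa;u)\,\varkappa^{2s}\,\dd\varkappa$, which is two-sided comparable to $\|u\|_{H^s_\kappa}^2$ by \eqref{beta s}. Integrating your intended differential inequality against $\varkappa^{2s}$ (and using $\int_\kappa^\infty\|u\|_{H^{-1/2}_\varkappa}^2\varkappa^{2s}\,\dd\varkappa\lesssim_s\|u\|_{H^s_\kappa}^2$) yields $\big|\tfrac{\dd}{\dd t}\beta_s(\kappa;u)\big|\lesssim_s\|u\|_{H^s_\kappa}^2\lesssim_s\beta_s(\kappa;u)$, and Gronwall on $\beta_s$ closes. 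The equicontinuity criterion $\|u\|_{H^s_\kappa}\to 0$ from Lemma~\ref{lem:equicty} is then exactly what $\beta_s$ controls. Finally, note that the admissibility condition \eqref{k0 2} on $\kappa$ involves $\sup_{[0,T]}\|u(t)\|_{H^s_\kappa}$ (a condition on the trajectory, not just the initial datum), so a bootstrap in time is needed to make the constants depend only on $\mc{F}$ and $\|a\|_{\ell^\infty}$; your proposal assumes this dependence without addressing the bootstrap.
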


We also show that this property is uniform with respect to the depth parameter $\delta$ for equation~\eqref{ILW}.
\begin{theorem}\label{t:ap ILW}
If $\mc{F}\subset H^\infty(\T)$ is a set of smooth initial data that is relatively compact in $H^s(\T)$, then for any $T>0$ the set of all orbits under \eqref{ILW} for $\del\geq 1$:
\begin{equation*}
\mc{F}^*_T = \{ u^\del(t) : u(0)\in \mc{F},\ t\in[-T,T],\ \del\geq 1 \}
\end{equation*}
is also relatively compact in $H^s(\T)$.
\end{theorem}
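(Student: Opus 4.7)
The plan is to realize Theorem~\ref{t:ap ILW} as a $\delta$-uniform version of Theorem~\ref{t:ap}, applied to the family of operators $A_\delta := T_\delta \partial_x^2 - H\partial_x^2$ obtained after performing the boost~\eqref{boost} to put~\eqref{ILW} into the form~\eqref{ILW 2}. The first step is to verify that the hypotheses of Definition~\ref{def:A} are satisfied by $A_\delta$ \emph{uniformly} in $\delta\geq 1$. Computing in Fourier variables,
\begin{equation*}
\wh{A_\del f}(n) = in^2\bigl[\coth(\del n) - \sgn(n)\bigr]\wh f(n),
\end{equation*}
so reality~\eqref{eq:A-real} is immediate from $\coth(-x) = -\coth(x)$. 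For the boundedness~\eqref{F}, use the identity $\coth(x) - 1 = 2/(e^{2x}-1)$ for $x>0$ to write, for $n\geq 1$,
\begin{equation*}
n^2\bigl[\coth(\del n) - 1\bigr] = \frac{2n^2}{e^{2\del n}-1}
\leq \frac{2n^2}{e^{2n}-1},
\qquad \del\geq 1,
\end{equation*}
which is bounded in $n$ by a constant independent of $\del$, and symmetrically for $n\leq -1$; since $a_\del(0)=0$, this gives $\sup_{\del\geq 1}\|a_\del\|_{\ell^\infty} < \infty$.

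Next, inspect the proof of Theorem~\ref{t:ap}. As outlined in the introduction, relative compactness in $H^s(\T)$ will be established via Arzel\`a--Ascoli from two ingredients: (i) a global-in-time $H^s$ bound coming from the exponential growth estimate on $\beta(\kappa;u)$, and (ii) an equicontinuity statement~\eqref{equicty 2} obtained by sending $\kappa\to\infty$ in that same estimate. The constants in both parts depend on the perturbation $A$ only through $\|a\|_{\ell^\infty}$ (and on $s$, $T$, and the initial data set $\mc F$), since the only place where $A$ enters is through the source term $d_u\zeta_n[Au]$ in the Duhamel identity~\eqref{duhamel}, which is estimated via~\eqref{dzeta} and the $H^s$-boundedness of $A$ granted by~\eqref{F}. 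Consequently, applying Theorem~\ref{t:ap} with $A=A_\del$ produces bounds on $\|u^\del(t)\|_{H^s}$ and on the equicontinuity modulus of the orbit that are uniform for $\del\geq 1$.

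With these two uniform properties in hand, the family $\mc F^*_T = \{u^\del(t) : u(0)\in\mc F,\ t\in[-T,T],\ \del\geq 1\}$ is bounded and equicontinuous in $H^s(\T)$, and Arzel\`a--Ascoli yields the claimed relative compactness. The main obstacle is therefore not in the present theorem, but in tracking the proof of Theorem~\ref{t:ap} carefully enough to confirm that every constant that appears depends on $A$ solely through $\|a\|_{\ell^\infty}$; one must check in particular that the a-priori bound on $\beta(\kappa;u)$ used for equicontinuity admits the estimate
\begin{equation*}
\tfrac{\dd}{\dd t}\beta(\kappa;u) \lesssim \|a\|_{\ell^\infty}\,\beta(\kappa;u)
\end{equation*}
with an implicit constant depending only on $\kappa$ and $s$, so that Gr\"onwall gives growth independent of $\del$. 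Once this uniformity in the proof of Theorem~\ref{t:ap} is verified, Theorem~\ref{t:ap ILW} follows immediately.
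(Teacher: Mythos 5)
Your proof is correct and arrives at the same essential crux as the paper—namely that the only uniform-in-$\delta$ input needed is a uniform bound on $\|a\|_{\ell^\infty}$, since the growth constant $K$ in the a-priori estimate in Section~4 is explicitly a function of $\|a\|_{\ell^\infty}$ alone—but your route to that crux differs from the paper's. You perform the Galilean boost $v^\del(t,x)=u^\del(t,x+\del^{-1}t)$ first, so that the boosted equation falls squarely within the form \eqref{ILW 2} with the uniformly bounded operator $A_\del = T_\del\partial_x^2-H\partial_x^2$, and then invoke Theorem~\ref{t:ap} as a black box. The paper instead keeps the original unknown $u^\del$ and the operator $A=-\del^{-1}\partial_x+T_\del\partial_x^2-H\partial_x^2$, which is \emph{not} uniformly bounded, but observes that $A$ may be replaced by $A+\del^{-1}\partial_x$ inside the derivative formula \eqref{beta dot 3} because $\dd_u\beta[\partial_x u]=0$ (translation invariance of $\beta$). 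Both arguments exploit translation invariance; yours pushes it into the change of variables, while the paper's pushes it into the conservation law and thereby avoids having to undo the boost at the end.

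That last point is the one small gap in your write-up: the theorem is stated for the un-boosted orbits $\{u^\del(t)\}$, whereas your argument produces relative compactness for $\{v^\del(t)\}$. You should say explicitly that this transfers: each $u^\del(t)$ is a spatial translate of $v^\del(t)$, translations are isometries on $H^s(\T)$, and the equicontinuity criterion \eqref{equicty 2} is manifestly translation-invariant, so the two orbit families are simultaneously bounded and equicontinuous. This is routine but should be recorded. Two further minor remarks: (i) the assertion $a_\del(0)=0$ is not literally right since $\coth(\del n)$ is singular at $n=0$; this is moot because the analysis lives on zero-mean functions, but worth noting. (ii) Your final displayed inequality should really be stated for $\beta_s(\kappa;u)$, the $\vk$-integrated quantity, since that is what is directly comparable to $\|u\|_{H^s_\kappa}^2$ via \eqref{beta s}; the paper first proves $|\frac{\dd}{\dd t}\beta(\kappa;u)|\lesssim_s \|u\|_{H^{-1/2}_\kappa}^2$ and only then integrates in $\vk$ to obtain the Gr\"onwall-ready bound on $\beta_s$.
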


\subsection{Sobolev norms}

We denote the Fourier coefficients as follows:
\begin{equation*}
\wh{f}(n) = \tfrac{1}{2\pi} \int_0^{2\pi} e^{-inx} f(x)\,\dd x  ,
\end{equation*}
so that
\begin{equation*}
f(x) = \sum_{n \in \Z} \wh{f}(n) e^{inx} \qtq{and}
\|f\|_{L^2}^2 = \sum_{n \in \Z} |\wh{f}(n)|^2.
\end{equation*}

On $H^r(\T)$, we define the norms
\begin{equation*}
\|f\|_{H^r_\kappa}^2 = \sum_{n \in \Z} |\wh{f}(n)|^2 ( |n|+\kappa)^{2r} .
\end{equation*}
For $\kappa > 0$ fixed, this is equivalent to the usual norm on $H^r(\T)$.  However, the behavior of these norms in the limit $\kappa\to\infty$ will play a key role in our analysis, as we will explain in the next subsection.

For the remainder of the paper, we will fix
\begin{equation}
s \in (-\tfrac12,0), \qtq{and define} \eps = \tfrac12 ( \tfrac12 - |s|) \in (0,\tfrac14) .
\label{s}
\end{equation}
As $s+1 > \frac12$, then $H^{s+1}(\T)$ embeds into $L^\infty$.  Specifically, using Cauchy--Schwarz in Fourier variables we find that
\begin{equation*}
\| f \|_{L^\infty} \leq \sum_{n\in\Z} |\wh{f}(n)| \lesssim_s \kappa^{-2\eps} \| f \|_{H^{s+1}_\kappa}
\end{equation*}
uniformly for $\kappa\geq 1$.  As a consequence, we see that $H^{s+1}(\T)$ is an algebra and
\begin{equation*}
\| fg \|_{H^{s+1}_\kappa} \lesssim_s \kappa^{-2\eps} \| f \|_{H^{s+1}_\kappa} \| g \|_{H^{s+1}_\kappa} .
\end{equation*}
On the other hand, in \cite{Killip2024}*{Lem.~2.1} the authors show that multiplication by $g\in H^{s+1}(\T)$ is a bounded operator on both $H^{\pm s}(\T)$, and 
\begin{equation*}
\| fg \|_{H^{\pm s}_\kappa} \lesssim_s \kappa^{-2\eps} \| f \|_{H^{\pm s}_\kappa} \| g \|_{H^{s+1}_\kappa} .
\end{equation*}
Combining the previous two inequalities with a complex interpolation argument between $H^{-s}(\T)$ and $H^{s+1}(\T)$, we see that multiplication by $g\in H^{s+1}(\T)$ is also bounded operator on $H^{1/2}(\T)$, with
\begin{equation}
\| fg \|_{H^{1/2}_\kappa} \lesssim_s \kappa^{-2\eps} \| f \|_{H^{1/2}_\kappa} \| g \|_{H^{s+1}_\kappa} \quad\text{uniformly for }\kappa\geq 1.
\label{H1/2}
\end{equation}

\subsection{Relative compactness  and a-priori bounds}

On the torus, the Arzel\`a--Ascoli theorem says that a set $\mc{F}$ in $H^r(\T)$ is relatively compact if and only if it is bounded and equicontinuous in $H^r(\T)$.
We recall that a bounded set $\mc{F}\subset H^r(\T)$  is \emph{equicontinuous} in $H^r(\T)$ if
\begin{equation}
\sup_{u\in\mc{F}}\ \sup_{|y| < \del}\, \| u(\cdot+y) - u(\cdot) \|_{H^r} \to 0 \quad\text{as }\del\to 0.
\label{equicty 2}
\end{equation}

By Plancherel, a bounded set $\mc{F}\subset H^r$ is equicontinuous if and only if it is tight in the Fourier variable:
\begin{equation}
\sup_{u\in\mc{F}}\, \sum_{|n|\geq\kappa} |\wh{f}(n)|^2 ( |n|+1)^{2r} \to 0 \quad\text{as }\kappa\to\infty .
\label{equicty 3}
\end{equation}
From this perspective, it is straightforward to verify the following equicontinuity criterion.
\begin{lemma}[\cite{Killip2024}*{Lem.~2.4}]\label{lem:equicty}
Let $s\in(-\frac 12,0)$ and let $\mc{F}$ be a bounded subset of $H^s(\T)$.  Then $\mc{F}$ is equicontinuous in $H^s(\T)$ if and only if
\begin{equation}
\sup_{u\in\mc{F}}\,\| u\|_{H^s_\kappa} \to 0 \quad\text{as } \kappa\to\infty .
\label{equicty}
\end{equation}
\end{lemma}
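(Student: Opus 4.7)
My plan is to prove both implications by an elementary Fourier-analytic splitting that exploits how the weight $(|n|+\kappa)^{2s}$ depends on $\kappa$: since $2s<0$, this weight is monotone decreasing in $\kappa$, and for $|n|$ small compared to $\kappa$ it degenerates like $\kappa^{2s}\to 0$. Throughout, I would use the characterization \eqref{equicty 3} of equicontinuity as uniform tightness of the Fourier tails.

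For the forward direction, suppose $\mc{F}$ is equicontinuous in $H^s(\T)$, fix $\eta>0$, and choose $N$ so that $\sup_{u\in\mc{F}}\sum_{|n|\geq N}|\wh u(n)|^2(|n|+1)^{2s}<\eta$. For any $\kappa\geq 1$, I would split
\[
\|u\|_{H^s_\kappa}^2
 = \sum_{|n|<N}|\wh u(n)|^2(|n|+\kappa)^{2s}
 + \sum_{|n|\geq N}|\wh u(n)|^2(|n|+\kappa)^{2s}.
\]
On the high-frequency piece, monotonicity gives $(|n|+\kappa)^{2s}\leq(|n|+1)^{2s}$, so this sum is bounded by $\eta$ uniformly on $\mc{F}$. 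On the low-frequency piece, I would combine $(|n|+\kappa)^{2s}\leq\kappa^{2s}$ with the crude inequality $\sum_{|n|<N}|\wh u(n)|^2\leq N^{-2s}\|u\|_{H^s}^2\leq N^{-2s}C$, where $C$ is the diameter of $\mc{F}$ in $H^s(\T)$. Choosing $\kappa$ large enough (depending on $N$ and $C$) to make $\kappa^{2s}N^{-2s}C<\eta$ closes the argument.

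For the reverse direction, I would observe that on the range $|n|\geq\kappa\geq 1$ one has $|n|+\kappa\leq 2|n|$, and because $2s<0$ this yields $(|n|+1)^{2s}\leq|n|^{2s}\leq 2^{-2s}(|n|+\kappa)^{2s}$. Summing,
\[
\sum_{|n|\geq\kappa}|\wh u(n)|^2(|n|+1)^{2s}
 \leq 2^{-2s}\|u\|_{H^s_\kappa}^2,
\]
so the hypothesis \eqref{equicty} forces the Fourier tails to vanish uniformly on $\mc{F}$, which is exactly equicontinuity via \eqref{equicty 3}.

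The only subtle point is the low-frequency piece in the forward direction: the factor $\kappa^{2s}$ by itself is not enough, because functions in $H^s(\T)$ with $s<0$ need not have uniformly controlled $L^2$ mass. The remedy is that $H^s$-boundedness still allows only polynomial growth in $N$ of the truncated $L^2$ mass (at rate $N^{-2s}$), and this is easily defeated by the $\kappa^{2s}$ decay once $\kappa$ is taken much larger than $N$. Beyond this bookkeeping, no interpolation or finer structure is required.
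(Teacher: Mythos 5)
Your proof is correct. Note that the paper does not supply its own argument here: the statement is quoted from \cite{Killip2024}*{Lem.~2.4}, and the surrounding text merely remarks that the verification is ``straightforward'' from the Plancherel characterization \eqref{equicty 3} of equicontinuity as Fourier-tail tightness. That is exactly the route you take, and both directions are carried out correctly: the forward implication hinges on $2s<0$ making $(|n|+\kappa)^{2s}$ decrease in $\kappa$, together with the crude low-frequency bound $\sum_{|n|<N}|\wh u(n)|^2\leq N^{-2s}\|u\|_{H^s}^2$ (which, as you rightly flag, is where boundedness of $\mc{F}$ is essential, since $s<0$ gives no direct $L^2$ control); the reverse implication follows from the pointwise comparison $(|n|+1)^{2s}\leq 2^{-2s}(|n|+\kappa)^{2s}$ on $|n|\geq\kappa$. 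One minor slip in wording: the constant $C$ you invoke should be the supremum of $\|u\|_{H^s}^2$ over $\mc{F}$, not its diameter, but this does not affect the argument.
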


It is therefore enough to prove a-priori estimates in terms of $\| u\|_{H^s_\kappa}$ that hold uniformly in the limit $\kappa\to\infty$.
\subsection{Conserved quantities for the Benjamin--Ono equation}

Our a-priori estimates are derived from conserved quantities $\beta_s(\kappa;u)$ for the flow of~\eqref{BO}, that are adapted to the $H^s$ regularity. We will show that these conserved quantities grow in time at most exponentially under the flow of~\eqref{ILW 2}.

The conserved quantities are built from the Lax operator associated to  the Benjamin--Ono equation, which  is defined as follows. For $u\in L^2(\T)$, we denote by
 $T_u$ the (unbounded) Toeplitz operator with symbol $u$:
\begin{equation*}
    T_u:f\in L^2_+(\T)\mapsto \Pi (uf)\in L^2_+(\T),
\end{equation*}
where $L^2_+(\T)=\{f\in L^2(\T) : \forall n<0,\, \wh{f}(n)=0 \}$ is the Hardy space, and  $\Pi$ is the Szeg\H{o} projector onto $L^2_+(\T)$:
\begin{equation*}
    \wh{\Pi(u)}(n)=\un_{n\geq 0} \wh{u}(n).
\end{equation*}
For $u\in L^2(\T)$, it is straightforward to show that the Lax operator  
\begin{equation*}
\mc{L}_u = - i\partial_x - T_u 
\end{equation*}
(acting on $L^2_+(\T)$) is self-adjoint with domain $H^1(\T)\cap L^2_+(\T)$.
However, the definition of Lax operator can also be extended to $u\in H^s(\T)$ when $s>-\frac 12$ (see also~\cite{GerardKappelerTopalov2020}*{Corollary 2}).

\begin{proposition}[\cite{Killip2024}*{Prop.~3.2}]
Let $s,\eps$ be as in \eqref{s}.  Given $u\in H^s(\T)$, there exists a unique self-adjoint, semi-bounded operator $\mc{L}_u$ associated to the quadratic form
\begin{equation*}
f \mapsto \langle f, \mc{L}_0 f \rangle - \int u(x) |f(x)|^2\,\dd x
\end{equation*}
with form domain $H^{1/2}_+$.  Moreover, there exists a constant $C_s \geq 1$ so that whenever
\begin{equation}
\kappa \geq C_s \big( 1 + \|u\|_{H^s_\kappa} \big)^{\frac{1}{2\eps}} ,
\label{k0}
\end{equation}
the resolvent $
(\mc{L}_u+\kappa)^{-1}$ exists and satisfies
\begin{equation*}
\| (\mc{L}_u+\kappa)^{-1} f \|_{H^{1/2}_\kappa} \lesssim \| f \|_{H^{-1/2}_\kappa} \qtq{and}
\| (\mc{L}_u+\kappa)^{-1} f \|_{H^{s+1}_\kappa} \lesssim_s \|f\|_{H^{s}_\kappa} .
\end{equation*}
\end{proposition}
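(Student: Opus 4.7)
The plan is to realize $\mc{L}_u$ as the self-adjoint operator associated with the quadratic form $Q_u(f) := \langle f, \mc{L}_0 f\rangle - \langle u, |f|^2\rangle$ on form domain $H^{1/2}_+$ via the KLMN form-representation theorem, and then to read off both resolvent estimates through Lax--Milgram together with a short regularity bootstrap.

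The main analytic input is a bilinear Sobolev estimate that controls the perturbation via the $H^s$-$H^{-s}$ duality. Since $-s = \tfrac12 - 2\eps \in (0,\tfrac12)$, the first step is to establish
\begin{equation*}
\bigl\| |f|^2 \bigr\|_{H^{-s}_\kappa} \lesssim_s \kappa^{-2\eps}\, \|f\|_{H^{1/2}_\kappa}^2 \quad\text{for } \kappa \geq 1,
\end{equation*}
by a Littlewood--Paley/paraproduct decomposition of $f\bar f$, extracting the $\kappa^{-2\eps}$ gain from Bernstein inequalities at dyadic frequencies $\gtrsim \kappa$, in the spirit of the multiplication estimates cited from \cite{Killip2024}*{Lem.~2.1}. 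This is the step I expect to be the main obstacle; once it is in place the rest of the argument is essentially formal. Combining this estimate with the identity $\langle f, (\mc{L}_0+\kappa) f\rangle = \|f\|_{H^{1/2}_\kappa}^2$ for $f \in H^{1/2}_+$ and the pairing $|\langle u, |f|^2\rangle| \leq \|u\|_{H^s_\kappa}\,\| |f|^2 \|_{H^{-s}_\kappa}$, the hypothesis \eqref{k0} (with $C_s$ enlarged if necessary) yields the coercivity
\begin{equation*}
\tfrac12 \|f\|_{H^{1/2}_\kappa}^2 \leq \langle f, (\mc{L}_u+\kappa) f\rangle \leq \tfrac32 \|f\|_{H^{1/2}_\kappa}^2 .
\end{equation*}
The form is then symmetric, closed, and coercive on $H^{1/2}_+$, and the KLMN theorem delivers a unique self-adjoint, semi-bounded operator $\mc{L}_u$ with form domain $H^{1/2}_+$.

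Lax--Milgram applied to this coercive form produces, for each $\phi \in H^{-1/2}_+$, a unique $f \in H^{1/2}_+$ solving $(\mc{L}_u+\kappa)f = \phi$, and testing against $f$ itself immediately gives the first resolvent bound $\|f\|_{H^{1/2}_\kappa} \lesssim \|\phi\|_{H^{-1/2}_\kappa}$. For the sharper $H^{s+1}$ estimate I would use the decomposition $\mc{L}_u = \mc{L}_0 - T_u$ with $T_u f = \Pi(uf)$ to rewrite the resolvent equation as the fixed-point identity
\begin{equation*}
f = (\mc{L}_0+\kappa)^{-1}\bigl(\phi + \Pi(uf)\bigr).
\end{equation*}
Since $(\mc{L}_0+\kappa)^{-1}$ is the Fourier multiplier $(|n|+\kappa)^{-1}$ on $L^2_+$, it is an isometric isomorphism $H^s_+ \to H^{s+1}_+$ in the $\kappa$-adapted norms; meanwhile, the multiplication estimate from \cite{Killip2024}*{Lem.~2.1} gives $\|\Pi(uf)\|_{H^s_\kappa} \lesssim \kappa^{-2\eps}\|u\|_{H^s_\kappa}\,\|f\|_{H^{s+1}_\kappa}$. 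A short bootstrap starting from $f \in H^{1/2}_+$ — using the polarized version of the bilinear estimate above to place $\Pi(uf)$ initially in $H^s_+$ — promotes $f$ to $H^{s+1}_+$, after which condition \eqref{k0} allows us to absorb the $\Pi(uf)$ contribution into the left-hand side, yielding $\|f\|_{H^{s+1}_\kappa} \lesssim_s \|\phi\|_{H^s_\kappa}$.
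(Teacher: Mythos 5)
Your overall framework — realize $\mc{L}_u$ via KLMN from the coercive form, then read off the $H^{-1/2}_\kappa\to H^{1/2}_\kappa$ resolvent bound from Lax--Milgram — is sound and essentially matches the route in \cite{Killip2024}. The key bilinear estimate $\| |f|^2 \|_{H^{-s}_\kappa} \lesssim_s \kappa^{-2\eps}\|f\|_{H^{1/2}_\kappa}^2$ (equivalently, $T_u:H^{1/2}_\kappa\to H^{-1/2}_\kappa$ with operator norm $\lesssim\kappa^{-2\eps}\|u\|_{H^s_\kappa}$) is indeed what drives the coercivity; you correctly flag that it is the main analytic input, and its proof is the content of the reference's Lemma~2.1.

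Where the argument breaks is the bootstrap for the $H^{s+1}_\kappa$ bound. You claim that ``the polarized version of the bilinear estimate above'' places $\Pi(uf)$ in $H^s_+$ when $f\in H^{1/2}_+$. But the polarized form of your estimate is $\|f\bar g\|_{H^{-s}_\kappa}\lesssim\kappa^{-2\eps}\|f\|_{H^{1/2}_\kappa}\|g\|_{H^{1/2}_\kappa}$, and computing $\|\Pi(uf)\|_{H^s_\kappa}$ by duality requires pairing $uf$ against test functions $g\in H^{-s}$, not $g\in H^{1/2}$. Since $-s=\tfrac12-2\eps<\tfrac12$, you cannot control $\|g\|_{H^{1/2}_\kappa}$ by $\|g\|_{H^{-s}_\kappa}$, so this step does not go through; the estimate you actually need, $\|f\bar g\|_{H^{-s}_\kappa}\lesssim\|f\|_{H^{1/2}_\kappa}\|g\|_{H^{-s}_\kappa}$, is a \emph{borderline} product estimate ($\tfrac12+(-s)=-s+\tfrac12$ exactly) that is not among your tools and is generically false without a logarithmic correction. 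You also cannot run the ``absorb the $\Pi(uf)$ term'' step a priori, because that argument presupposes $\|f\|_{H^{s+1}_\kappa}<\infty$, which is precisely what you have not yet established.

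The clean way to obtain the $H^{s+1}_\kappa$ estimate avoids the bootstrap entirely: write $\mc{L}_u+\kappa=(\mc{L}_0+\kappa)(I-(\mc{L}_0+\kappa)^{-1}T_u)$ and expand $(\mc{L}_u+\kappa)^{-1}=\sum_{k\geq 0}(\mc{L}_0+\kappa)^{-1}\big(T_u(\mc{L}_0+\kappa)^{-1}\big)^k$ as a Neumann series, using the multiplication estimate $\|T_u g\|_{H^s_\kappa}\lesssim_s\kappa^{-2\eps}\|u\|_{H^s_\kappa}\|g\|_{H^{s+1}_\kappa}$ from \cite{Killip2024}*{Lem.~2.1} together with the isometry $(\mc{L}_0+\kappa)^{-1}:H^s_\kappa\to H^{s+1}_\kappa$ to conclude that $T_u(\mc{L}_0+\kappa)^{-1}$ is a contraction on $H^s_\kappa$ under the hypothesis~\eqref{k0}. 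Summing the series gives $\|(\mc{L}_u+\kappa)^{-1}\phi\|_{H^{s+1}_\kappa}\lesssim_s\|\phi\|_{H^s_\kappa}$ directly, and the same device with the spaces $H^{\mp1/2}_\kappa$ in place of $H^s_\kappa,H^{s+1}_\kappa$ recovers your Lax--Milgram bound, so that both resolvent estimates follow from a single contraction argument rather than from a regularity bootstrap.
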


Using the Lax operator, it is possible to construct conservation laws for the flow of~\eqref{BO} that have the same level of regularity as the Sobolev spaces $H^s(\T)$. Denoting 
\begin{equation*}
\beta(\kappa;u):=\langle \Pi u,(\mc{L}_u+\kappa)^{-1}\Pi u\rangle,
\end{equation*}
we will see that the relevant conservation law at $H^s$ regularity is
\begin{equation*}
\beta_s(\kappa;u) := \int_{\kappa}^\infty \beta(\vk;u) \vk^{2s}\,\dd\vk.
\end{equation*}

\begin{proposition}[\cite{Killip2024}*{Prop.~4.1 and 4.3}]
There exists a constant $C_s\geq 1$ so that for any $u\in H^s$ and $\kappa$ satisfying \eqref{k0}: 
\begin{enumerate}
\item The function $m(x;\kappa,u) = (\mc{L}_u+\kappa)^{-1} \Pi u$ is in $H^{s+1}_+(\T)=H^{s+1}(\T)\cap L^2_+(\T)$, and satisfies
\begin{equation}
\| m \|_{H^{1/2}_\kappa} \lesssim \| u \|_{H^{-1/2}_\kappa} \qtq{and}
\| m \|_{H^{s+1}_\kappa} \lesssim_s \| u \|_{H^s_\kappa} .
\label{m}
\end{equation}
\item The quantity $\beta(\kappa;u)$ is a finite, real-valued, and real-analytic function of $u$ that satisfies
\begin{equation}
\beta(\kappa;u) 
    = \int_0^{2\pi} u(x) m(x;\kappa,u)\, \dd x
    = \int_0^{2\pi} u(x) \ol{m}(x;\kappa,u)\, \dd x
    \label{beta}
\end{equation}
and
\begin{equation}
\dd_u\beta[f] = \int_0^{2\pi} \big( |m|^2 + m + \ol{m} \big)(x;\kappa,u) f(x)\, \dd x .
\label{dbeta}
\end{equation}
\item The \eqref{BO} flow conserves $\beta$: for all $u\in H^\infty$, we have
\begin{equation}
\dd_u\beta [ H\partial^2_x u - 2u\partial_xu ] = 0 .
\label{beta dot}
\end{equation}
\item The quantity $\beta_s(\kappa;u)$
satisfies
\begin{equation}
C_s^{-1} \| u\|_{H^s_\kappa}^2 \leq \beta_s(\kappa;u) \leq C_s \| u \|_{H^s_\kappa}^2 .
\label{beta s}
\end{equation}
\end{enumerate}
\end{proposition}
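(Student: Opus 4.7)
The plan is to prove the four items in order, the first two being essentially direct consequences of the preceding proposition, the third requiring the Lax pair structure of~\eqref{BO}, and the fourth a comparison estimate.

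For (1), I would apply the two resolvent bounds directly to $m=(\mc{L}_u+\kappa)^{-1}\Pi u$, using that the Szeg\H{o} projector $\Pi$ is a bounded Fourier multiplier on $H^r(\T)$ for every $r\in\R$. The fact that $m$ lies in $H^{s+1}_+(\T)$ is automatic since $(\mc{L}_u+\kappa)^{-1}$ acts on the Hardy space $L^2_+(\T)$.

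For (2), finiteness of $\beta(\kappa;u)$ follows from the Cauchy--Schwarz bound $|\beta(\kappa;u)|\leq \|\Pi u\|_{H^{-1/2}_\kappa}\|m\|_{H^{1/2}_\kappa}$, and reality from self-adjointness of $(\mc{L}_u+\kappa)^{-1}$. To obtain the integral formula \eqref{beta}, I would use self-adjointness of $\Pi$ and $m\in L^2_+$ to collapse $\langle\Pi u,m\rangle=\langle u,m\rangle$, and then invoke reality of $\beta$ together with $u$ being real-valued to exchange $m\leftrightarrow\ol m$. For the differential formula, I would apply the product rule and the resolvent identity
\begin{equation*}
\dd_u(\mc{L}_u+\kappa)^{-1}[f]=(\mc{L}_u+\kappa)^{-1}T_f(\mc{L}_u+\kappa)^{-1},
\end{equation*}
which follows from $\dd_u\mc{L}_u[f]=-T_f$. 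This yields $\dd_u\beta[f]=2\Re\langle\Pi f,m\rangle+\langle m,T_f m\rangle$; the first term reduces to $\int f(m+\ol m)\,\dd x$ using self-adjointness of $\Pi$ for real $f$, and the second to $\int f|m|^2\,\dd x$ because $\langle m,(I-\Pi)(fm)\rangle=0$ by Fourier support. Real-analyticity in $u$ follows from expanding $(\mc{L}_u+\kappa)^{-1}$ as a Neumann series in $T_u$ around $(\mc{L}_0+\kappa)^{-1}$, whose operator-norm convergence on the appropriate spaces is ensured by the resolvent bounds and the smallness assumption \eqref{k0}.

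For (3), conservation is the main obstacle, as it relies on the complete integrability of~\eqref{BO}. The plan is to exhibit an operator $P_u$ on $L^2_+(\T)$ (a second-order expression built from $-i\partial_x$ and $T_u$) such that $\dd_t\mc{L}_u=[P_u,\mc{L}_u]$ along smooth~\eqref{BO} solutions, and consequently $\dd_t(\mc{L}_u+\kappa)^{-1}=[(\mc{L}_u+\kappa)^{-1},P_u]$. Then $\dot\beta$ splits into two terms coming from $\partial_t\Pi u=\Pi(H\partial_x^2 u-2u\partial_x u)$ and one from the commutator; using self-adjointness of $P_u$ and the defining equation $(\mc{L}_u+\kappa)m=\Pi u$, the three pieces can be reorganized and shown to cancel. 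Alternatively, more in the spirit of \eqref{dbeta}, one may verify directly that
\begin{equation*}
\int(|m|^2+m+\ol m)(H\partial_x^2 u-2u\partial_x u)\,\dd x=0
\end{equation*}
through integration by parts, the Fourier identity $H=-i(2\Pi-I)$ on zero-mean functions, and repeated use of the equation $-i\partial_x m+\kappa m-\Pi(um)=\Pi u$ satisfied by $m$. Either route reduces to a finite but delicate algebraic verification, and is the technical heart of the proposition.

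For (4), the approach is to extract the leading-order behavior of $\beta(\vk;u)$ in $\vk$. Splitting $\mc{L}_u=\mc{L}_0-T_u$, a Neumann expansion produces
\begin{equation*}
\beta(\vk;u)=\sum_{n\geq 0}|\wh u(n)|^2(n+\vk)^{-1}+R(\vk;u),
\end{equation*}
where the main term arises from $(\mc{L}_0+\vk)^{-1}$ having Fourier symbol $(n+\vk)^{-1}$ on $L^2_+$, and the remainder is controlled by $\vk^{-2\eps}\sum_{n\geq 0}|\wh u(n)|^2(n+\vk)^{-1}$ using the bilinear estimate \eqref{H1/2} together with the resolvent bounds. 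Hence $\beta(\vk;u)\asymp\|\Pi u\|_{H^{-1/2}_\vk}^2$ uniformly for $\vk$ satisfying \eqref{k0}. Integrating against $\vk^{2s}\,\dd\vk$ and exchanging sum and integral, it remains to verify the elementary comparison
\begin{equation*}
\int_\kappa^\infty\frac{\vk^{2s}}{n+\vk}\,\dd\vk\asymp_s (n+\kappa)^{2s}\quad\text{uniformly in }n\geq 0\text{ and }\kappa\geq 1,
\end{equation*}
which follows by splitting the integral at $\vk=n$ and using $2s<0$ together with $2s+1>0$. Summing over $n$ and symmetrizing via $|\wh u(-n)|=|\wh u(n)|$ produces \eqref{beta s}.
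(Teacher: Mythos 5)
First, a point of comparison: the paper does not prove this proposition at all — it is imported verbatim from \cite{Killip2024}*{Prop.~4.1 and 4.3} — so your proposal can only be measured against that cited proof. Items (1), (2) and (4) of your outline are essentially correct reconstructions of it: (1) is exactly the two resolvent bounds applied to $m=(\mc{L}_u+\kappa)^{-1}\Pi u$ together with boundedness of $\Pi$; in (2) the resolvent-derivative identity $\dd_u(\mc{L}_u+\kappa)^{-1}[f]=(\mc{L}_u+\kappa)^{-1}T_f(\mc{L}_u+\kappa)^{-1}$, the reduction $\langle m,T_fm\rangle=\int f|m|^2$ by Fourier support, and the Neumann-series argument for analyticity are all the right mechanisms; and in (4) the expansion of $\beta(\vk;u)$ around the free resolvent plus the elementary comparison $\int_\kappa^\infty \vk^{2s}(n+\vk)^{-1}\dd\vk\asymp_s(n+\kappa)^{2s}$ is the correct route (note only that your stated remainder bound is missing a factor $\|u\|_{H^s_\vk}$; it is harmless because \eqref{k0} forces $\vk^{-2\eps}\|u\|_{H^s_\vk}\lesssim 1$ for all $\vk\geq\kappa$, but as written the two sides of your claimed bound do not match).

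The genuine gap is item (3), which you correctly identify as the heart of the matter and then do not prove. Conservation \eqref{beta dot} does not follow from the Lax equation $\tfrac{\dd}{\dd t}\mc{L}_u=[P_u,\mc{L}_u]$ alone: $\beta$ depends on $u$ both through the resolvent and through the vector $\Pi u$, so the commutator contribution must cancel against the two terms generated by $\partial_t\Pi u=\Pi(H\partial_x^2u-2u\partial_xu)$, and this cancellation rests on a second structural identity — the precise relation expressing $\Pi(H\partial_x^2u-2u\partial_xu)$ as $P_u\Pi u$ plus a term which is a function of $\mc{L}_u$ applied to $\Pi u$ (so that it can be absorbed using self-adjointness and $(\mc{L}_u+\kappa)m=\Pi u$). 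You neither exhibit $P_u$ nor state or verify that identity; likewise the alternative route, the direct verification that $\int(|m|^2+m+\ol{m})(H\partial_x^2u-2u\partial_xu)\,\dd x=0$, is announced but not carried out. Writing that the pieces ``can be reorganized and shown to cancel'' asserts the conclusion rather than proving it; this cancellation is exactly the integrable-structure input for which the paper leans on \cite{Killip2024} (ultimately on the G\'erard--Kappeler identities), so as it stands the one non-routine part of the proposition is left unproven.
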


\subsection{A-priori estimates}

We now show the following a-priori estimate, assuming that the Fourier multiplier $A$ has Fourier coefficients in $\ell^{\infty}(\Z)$.
\begin{proposition}[A-priori estimate]
There exists a constant $K = K(\| a\|_{\ell^\infty}) > 0$ so that, if $u\in H^{\infty}(\T)$ is a smooth solution of \eqref{ILW 2} and $\kappa,T>0$ satisfy
\begin{equation}
\kappa \geq C_s \big( 1 + \sup_{t\in [0,T]} \|u(t)\|_{H^s_\kappa} \big)^{\frac{1}{2\eps}} ,
\label{k0 2}
\end{equation}
then
\begin{equation}
\beta_s(\kappa;u(t)) \leq e^{Kt} \beta_s(\kappa;u(0)) \quad\text{for all }t\in [0,T] .
\label{ap est}
\end{equation}
\end{proposition}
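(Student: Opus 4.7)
The plan is to differentiate $\beta(\vk;u(t))$ in time for each $\vk \geq \kappa$, observe that only the perturbation $Au$ contributes (thanks to the \eqref{BO}-conservation identity \eqref{beta dot}), estimate this contribution by $\|u\|_{H^{-1/2}_\vk}^2$ with a constant depending only on $\|a\|_{\ell^\infty}$, and finally integrate against $\vk^{2s}\,\dd\vk$ to close a Gronwall inequality at the level of $\beta_s$.

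Concretely, for a smooth solution $u$ the chain rule together with \eqref{beta dot} and \eqref{dbeta} gives
\[
\tfrac{\dd}{\dd t}\beta(\vk;u(t)) \;=\; \dd_u\beta[\,Au\,] \;=\; \int_0^{2\pi}\bigl(|m|^2 + 2\Re m\bigr)\,Au\,\dd x,
\]
where $m=m(\cdot;\vk,u(t))$. I would control the linear piece by the $H^{-1/2}_\vk$--$H^{1/2}_\vk$ duality pairing together with \eqref{m}:
\[
\Big|\int (m+\ol{m})\,Au\,\dd x\Big| \;\leq\; 2\,\|Au\|_{H^{-1/2}_\vk}\,\|m\|_{H^{1/2}_\vk} \;\lesssim\; \|a\|_{\ell^\infty}\,\|u\|_{H^{-1/2}_\vk}^2,
\]
using that $A$ is bounded on $H^{-1/2}$ with norm at most $\|a\|_{\ell^\infty}$. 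For the quadratic piece I would pair duality with the algebra bound \eqref{H1/2}:
\[
\Big|\int |m|^2 Au\,\dd x\Big| \;\leq\; \|Au\|_{H^{-1/2}_\vk}\,\bigl\||m|^2\bigr\|_{H^{1/2}_\vk} \;\lesssim\; \|a\|_{\ell^\infty}\,\vk^{-2\eps}\,\|u\|_{H^{-1/2}_\vk}^2\,\|u\|_{H^s_\vk}.
\]
The role of \eqref{k0 2} is exactly to absorb the factor $\vk^{-2\eps}\|u\|_{H^s_\vk}$: since $s<0$, the map $\vk\mapsto\|u\|_{H^s_\vk}$ is nonincreasing, so \eqref{k0 2} forces this factor to be $\lesssim 1$ uniformly for all $\vk\geq\kappa$ and $t\in[0,T]$. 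Altogether this yields $\bigl|\tfrac{\dd}{\dd t}\beta(\vk;u(t))\bigr| \leq K\|u\|_{H^{-1/2}_\vk}^2$ with $K=K(\|a\|_{\ell^\infty})$.

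Multiplying by $\vk^{2s}$, integrating over $\vk\in[\kappa,\infty)$, swapping sums and integrals, and performing a direct computation splitting at $\vk=|n|$ gives
\[
\int_\kappa^\infty \frac{\vk^{2s}}{|n|+\vk}\,\dd\vk \;\lesssim_s\; (|n|+\kappa)^{2s},
\]
so that $\int_\kappa^\infty \|u\|_{H^{-1/2}_\vk}^2\,\vk^{2s}\,\dd\vk \lesssim \|u\|_{H^s_\kappa}^2 \lesssim \beta_s(\kappa;u)$ by \eqref{beta s}. Consequently $\bigl|\tfrac{\dd}{\dd t}\beta_s(\kappa;u(t))\bigr|\leq K'\beta_s(\kappa;u(t))$, and a standard Gronwall argument on $[0,T]$ yields \eqref{ap est}. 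The main obstacle is the quadratic term: the only available smallness is the $\vk^{-2\eps}$ from \eqref{H1/2}, and \eqref{k0 2} is the precise threshold that makes it beat $\|u\|_{H^s_\vk}$ uniformly in $\vk\geq\kappa$. Working at the level of $\beta_s$ rather than pointwise in $\vk$ at the level of $\beta$ is also essential here, as a clean inequality of the form $|\tfrac{\dd}{\dd t}\beta(\vk;u)|\leq K\beta(\vk;u)$ would require a nontrivial lower bound $\beta(\vk;u)\gtrsim\|u\|_{H^{-1/2}_\vk}^2$; integrating in $\vk$ first sidesteps this by invoking only \eqref{beta s}.
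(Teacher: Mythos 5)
Your proof is correct and follows essentially the same route as the paper: differentiate $\beta(\vk;u(t))$ using \eqref{dbeta}--\eqref{beta dot}, bound the linear and quadratic contributions of $m$ by $\|u\|_{H^{-1/2}_\vk}^2$ via \eqref{m}, \eqref{H1/2}, and the boundedness of $A$, then integrate in $\vk$ against $\vk^{2s}$ and apply \eqref{beta s} and Gronwall. The one small thing you make explicit that the paper leaves implicit is the monotonicity of $\vk\mapsto\|u\|_{H^s_\vk}$ for $s<0$, which is indeed what lets the threshold \eqref{k0 2} propagate from $\vk=\kappa$ to all $\vk\geq\kappa$; this is a welcome clarification but not a different argument.
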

\begin{proof}
Using \eqref{dbeta} and \eqref{beta dot}, we compute
\begin{equation}
\frac{\dd}{\dd t} \beta(\kappa;u(t))
= \int_0^{2\pi} \big( |m|^2 + m + \ol{m} \big)(x;\kappa,u) Au(x)\, \dd x .
\label{beta dot 3}
\end{equation}
Note that by \eqref{F} we have
\begin{equation*}
\| Au \|_{H^{-1/2}_\kappa} \lesssim \| u\|_{H^{-1/2}_\kappa}  
\end{equation*}
uniformly for $\kappa\geq 1$.  Combining this with \eqref{m}, we find
\begin{equation*}
\bigg| \int_0^{2\pi} m(x;\kappa,u) Au(x)\, \dd x \bigg| 
\leq \| m\|_{H^{1/2}_\kappa} \| Au \|_{H^{-1/2}_\kappa} 
\lesssim \| u \|_{H^{-1/2}_\kappa}^2 ,
\end{equation*}
and similarly for the term involving $\overline{m}$.
On the other hand, from \eqref{H1/2} and \eqref{m} we see that
\begin{align*}
\bigg| \int_0^{2\pi} |m|^2(x;\kappa,u) Au(x)\, \dd x \bigg| 
&\leq \big\| |m|^2 \big\|_{H^{1/2}_\kappa} \| Au \|_{H^{-1/2}_\kappa} \\
&\lesssim_s \kappa^{-2\eps} \|m\|_{H^{s+1}_\kappa} \|m\|_{H^{1/2}_\kappa} \| Au \|_{H^{-1/2}_\kappa}  \\
&\lesssim_s \kappa^{-2\eps} \| u \|_{H^s_\kappa} \| u \|_{H^{-1/2}_\kappa}^2 .
\end{align*}
By assumption \eqref{k0 2} on $\kappa$ we find that
\begin{equation*}
\bigg| \int_0^{2\pi} |m|^2(x;\kappa,u) Au(x)\, \dd x \bigg| 
    \lesssim_s \| u \|_{H^{-1/2}_\kappa}^2 .
\end{equation*}
Altogether, we conclude
\begin{equation}
\bigg| \frac{\dd}{\dd t} \beta(\kappa;u(t)) \bigg| \lesssim_s \| u(t) \|_{H^{-1/2}_\kappa}^2 
\label{beta dot 2}
\end{equation}
uniformly for $t\in[0,T]$.

Now we will integrate in $\kappa$.  Note that
\begin{equation*}
\int_{\kappa}^\infty \frac{\vk^{2s}}{|n| + \vk} \,\dd\vk \lesssim_s \frac{1}{(|n| + \kappa)^{2|s|}}
\quad\text{uniformly for }n\in\Z ,
\end{equation*}
and so by Fubini,
\begin{equation*}
\int_{\kappa}^\infty \|f\|_{H^{-1/2}_\vk}^2\, \vk^{2s}\,\dd\vk \lesssim_s \| f\|_{H^s_\kappa} .
\end{equation*}
Combining this with \eqref{beta dot 2} and \eqref{beta s}, we find
\begin{equation*}
\bigg| \frac{\dd}{\dd t} \beta_s(\kappa;u(t)) \bigg| \lesssim_s \| u(t) \|_{H^{s}_\kappa}^2 \lesssim_s \beta_s(\kappa;u(t))
\end{equation*}
uniformly for $t\in[0,T]$.  Therefore the estimate~\eqref{ap est} now follows from Gronwall's inequality.
\end{proof}

Together with the fact that $\beta_s(\kappa;u)$ lies at $H^s$ regularity thanks to \eqref{beta s} and with the equicontinuity criterion \eqref{equicty}, a straightforward bootstrap argument yields Theorem~\ref{t:ap}.

The proof of Theorem~\ref{t:ap ILW} follows a parallel argument. We can rewrite \eqref{ILW} in the form~\eqref{ILW 2} for the operator $A$ in \eqref{F:ILW}.  In Fourier variables, the operator $A$ has the symbol
\begin{equation}
a(n)
    = - \frac{in}{\delta}+in^2 (\coth(\delta n)-\sgn(n))=-\frac{in}{\delta}+\frac{ 2 in|n|}{e^{2|\delta n|}-1}.
    \label{smoothing}
\end{equation}
Using this, it is straightforward to verify (see, for example, \cite{Chapouto2024}*{Lem.~2.1}) that for any $s\in\R$, the operator $A+\del^{-1} \partial_x$ is bounded on $H^s$ and satisfies
\begin{equation}
\| (A+\delta^{-1}\partial_x)f \|_{H^s} \lesssim_s \del^{-2} \| f\|_{H^s} \quad\text{uniformly for }\del>0.
\label{smoothing 2}
\end{equation}
On the other hand, we are able to replace $A$ by $A+\delta^{-1}\partial_x$ in \eqref{beta dot 3} since
\begin{equation*}
\dd_u \beta[ \partial_x u ] = 0 .
\end{equation*}
(Indeed, this simply expresses the fact that $\beta$ is invariant under spatial translations; see \cite{Killip2024}*{Lem.~4.1} for details.)  

Altogether, we conclude that the solutions $u^\del(t)$ of \eqref{ILW} satisfy \eqref{k0 2} with a constant $K$ that can be chosen uniformly in $\del\geq 1$.  Theorem~\ref{t:ap ILW} then follows from the same bootstrap argument as Theorem~\ref{t:ap}.

\section{Global well-posedness for $-\frac 12<s<0$}\label{s:WP}

The goal of this section is to demonstrate that \eqref{ILW 2} is globally well-posed in $H^s_0(\T)$ for $-\frac12<s<0$, and so finish the proof of Theorem~\ref{t:wp 2}.  As we will see below, this will follow easily from the following result.
\begin{theorem}
\label{t:lwp-precise}
Fix $-\frac12<s<0$.  Let $(u_j(0))_{j\geq 1} \subset H^\infty_0(\T)$ be a sequence of real-valued initial data that converges in $H^s_0(\T)$.  Then, for any $T>0$, the corresponding solutions $u_j(t)$ to \eqref{ILW 2} converge in $\mc{C}([-T,T];H^s_0(\T))$.
\end{theorem}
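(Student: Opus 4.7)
The plan is to combine the a priori compactness from Theorem~\ref{t:ap} with a Duhamel--Gronwall estimate in a weaker Birkhoff norm, and then to upgrade to the target regularity using equicontinuity a second time.

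First, since the sequence $(u_j(0))$ converges, the set $\mc{F}=\{u_j(0):j\geq 1\}$ is relatively compact in $H^s_0(\T)$, and Theorem~\ref{t:ap} promotes this to relative compactness---hence boundedness and equicontinuity---of the orbit set $\mc{F}^*_T$ in $H^s_0(\T)$. Let $K$ denote its closure. Proposition~\ref{t:birkhoff-bdd} then gives~\eqref{dzeta}--\eqref{dzeta 2} uniformly for $u,\tilde u\in K$, so differences in $H^s$ and in $\h^{s+\frac12}$ are bi-Lipschitz equivalent over $K$. It therefore suffices to show that $(\zeta(u_j))$ is Cauchy in $\mc{C}_T\h^{s+\frac12}$.

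Subtracting the Duhamel formulas~\eqref{duhamel} for two indices $j,k$, the difference $\zeta_n(u_j(t))-\zeta_n(u_k(t))$ decomposes into an initial-data contribution, a phase-difference term $[e^{i\Omega_n(t;u_j)}-e^{i\Omega_n(t;u_k)}]\zeta_n(u_k(0))$, and integrals involving the $A$-term and its phase variation. As observed at the end of Section~\ref{s:s-pos}, the phase difference $|\Omega_n(t;u_j)-\Omega_n(t;u_k)|$ is controlled via~\eqref{eq:omega_n} by $\sum_k\min(k,n)\bigl|\,|\zeta_k(u_j)|^2-|\zeta_k(u_k)|^2\bigr|$, which naively requires $\h^{\frac12}$ control of the difference---unavailable for $s<0$. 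The remedy is to run Gronwall in the weaker space $\h^{s-\frac12}$, where the more forgiving weight $n^{2s-1}$ together with the a priori $\h^{s+\frac12}$-boundedness and equicontinuity of Birkhoff coordinates on $\Phi(K)$ absorbs the phase contribution into a multiple of $\|\zeta(u_j)-\zeta(u_k)\|_{\mc{C}_T\h^{s-\frac12}}$. The $A$-terms are controlled via~\eqref{F},~\eqref{dzeta}, and~\eqref{dzeta 2} uniformly on $K$, and Gronwall's inequality then yields that $(\zeta(u_j))$ is Cauchy in $\mc{C}_T\h^{s-\frac12}$.

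To close the proof, I would upgrade from $\mc{C}_T\h^{s-\frac12}$ to $\mc{C}_T\h^{s+\frac12}$ by a second appeal to equicontinuity. Given $\eps>0$, Lemma~\ref{lem:equicty} applied to $\Phi(\mc{F}^*_T)\subset\h^{s+\frac12}$ yields $N$ so that $\sup_{j,t}\sum_{n>N}n^{2s+1}|\zeta_n(u_j(t))|^2<\eps$, which controls the high-frequency tail of the difference uniformly in $j,k$ and $t$. On the low-frequency block $n\leq N$, the pointwise bound $n^{2s+1}\leq N^2\,n^{2s-1}$ converts $\h^{s-\frac12}$ smallness into $\h^{s+\frac12}$ smallness, and combined with the bi-Lipschitz step this gives convergence in $\mc{C}_T H^s$. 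The main obstacle is the Gronwall step: one must distribute the weights $\min(k,n)$ from the phase difference against $n^{2s-1}$ carefully, using both a priori boundedness and equicontinuity of the compact orbit set, so that the closing estimate does not pay a divergent power of $n$.
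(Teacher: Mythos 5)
Your overall architecture is the right one and matches the paper's: establish a weak-norm estimate on the difference of Birkhoff coordinates, then upgrade via a frequency splitting that exploits the a-priori equicontinuity of $\Phi(\mc{F}^*_T)$ in $\h^{s+\frac12}$.  The particular weak norm ($\h^{s-\frac12}$ versus the paper's $\h^{-1}$) is a cosmetic choice.  However, there is a genuine gap in the way you intend to close the weak-norm step, and you have flagged it yourself as ``the main obstacle'' without resolving it.

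The Gronwall estimate you propose in $\h^{s-\frac12}$ does not close.  Subtracting the Duhamel formulas~\eqref{duhamel}, both the phase-difference pieces and the $A$-pieces inevitably produce the \emph{strong} $H^s$ norm of $u_j-u_k$ on the right-hand side, not the weak $\h^{s-\frac12}$ norm.  The phase term $\tfrac1n|\omega_n(u_j)-\omega_n(u_k)|$ is controlled by $\sum_m\big||\zeta_m(u_j)|^2-|\zeta_m(u_k)|^2\big|$, which requires $\ell^2$ (i.e.~$\h^0\supset\h^{s+\frac12}$) control of the difference; even if you split low/high $m$ and invoke equicontinuity for the tail, the low part costs $N^{\frac12-s}\|\Phi(u_j)-\Phi(u_k)\|_{\h^{s-\frac12}}$ while the tail gives a \emph{fixed} (small but nonzero) error term, so one cannot iterate Gronwall to conclude.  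More fundamentally, the $A$-terms $\dd_{u_j}\zeta_n[Au_j]-\dd_{u_k}\zeta_n[Au_k]$ are estimated via~\eqref{dzeta} and~\eqref{dzeta 2}, which only give bounds at the native regularity $H^s\to\h^{s+\frac12}$ — there is no analogue of these Lipschitz bounds at regularity $H^{s-1}$, since $s-1<-\tfrac12$ lies outside the range where $\Phi$ is defined.  So the weak-norm estimate is genuinely of the form
\[
\| \Phi(u_j) - \Phi(u_k) \|_{\mc{C}_{T_0}\h^{s-\frac12}} \lesssim \| u_j(0) - u_k(0) \|_{H^s} + T_0 \| u_j - u_k \|_{\mc{C}_{T_0}H^s},
\]
with the strong norm (not the weak one) on the right, and this is not a Gronwall inequality.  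The paper's Lemma~\ref{lem:low} proves precisely this (with $\h^{-1}$), and then the closing is a \emph{coupled short-time absorption}: substitute the above into the frequency-split bound
\[
\| u_j - u_k \|_{\mc{C}_{T_0}H^s} \lesssim N^{s+\frac32}\| \Phi(u_j) - \Phi(u_k) \|_{\mc{C}_{T_0}\h^{-1}} + 2\sup_{u\in\mc{F}^*_T}\|\Phi(u)\|_{\h^{s+\frac12}_{\geq N}},
\]
first fix $N$ large to make the tail small, then shrink $T_0$ so that $N^{s+\frac32}T_0$ can be absorbed into the left side, and finally iterate in time.  If you replace the Gronwall step in your outline by this Lipschitz-type weak-norm bound with strong norm on the right, together with the absorption-and-iterate closing, the argument becomes correct and is essentially the paper's proof.

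One smaller point: the upgrade step as you state it — ``$\h^{s-\frac12}$ smallness converts to $\h^{s+\frac12}$ smallness via $n^{2s+1}\leq N^2n^{2s-1}$ on $n\leq N$'' — presupposes $\h^{s-\frac12}$ smallness has already been established independently, which (as above) it cannot be.  In the correct version, the weak-norm bound and the strong-norm bound must be fed into one another simultaneously, not sequentially.
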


Let 
\[
\mc{F} = \{ u_j(0) : j\geq 1\}.
\]
Fix $T>0$, and let $\mc{F}^*_T$ be defined as in \eqref{F*}.  By Theorem~\ref{t:ap}, the set $\mc{F}^*_T$ is relatively compact in $H^s(\T)$.  As the Birkhoff map $\Phi:H^s_0(\T)\to\h^{s+\frac 12}$ is continuous, it follows that the set
\begin{equation}
\{ \Phi(u) : u\in\mc{F}^*_T \}
\label{F* 2}
\end{equation}
is relatively compact in $\h^{s+\frac12}$, and hence is bounded and tight (see also~\eqref{equicty 3}). We can therefore split the Birkhoff coordinates between low and high frequencies. We will estimate the low frequencies in the weaker norm $\mc{C}_T\h^{-1}$ via the following lemma. Then, tightness will enable us to treat the high frequency part as a small remainder term.

\begin{lemma}\label{lem:low}
For any $T_0\in(0,T]$, we have
\begin{equation}
\| \Phi(u_j(t)) - \Phi(u_k(t)) \|_{\mc{C}_{T_0}\h^{-1}} \lesssim \| u_j(0) - u_k(0) \|_{H^s} + T_0 \| u_j(t) - u_k(t) \|_{\mc{C}_{T_0}H^s} ,
\label{lwp 5}
\end{equation}
where the implicit constant is uniform in $j,k\geq 1$ and on the larger time interval $[-T,T]$.
\end{lemma}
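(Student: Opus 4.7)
The plan is to subtract the Duhamel formulas~\eqref{duhamel} for $u=u_j$ and $u=u_k$, and then bound the resulting expression in $\mc{C}_{T_0}\h^{-1}$. The pivotal ingredient is the phase estimate
\begin{equation*}
|\Omega_n(t;u_j) - \Omega_n(t;u_k)| \lesssim T_0\,n\,\|u_j-u_k\|_{\mc{C}_{T_0}H^s},
\end{equation*}
uniform in $n\geq 1$ and $t\in[-T_0,T_0]$. This will follow from the formula~\eqref{eq:omega_n} for $\omega_n$ together with $\min(k,n)\leq n$, Cauchy--Schwarz, the embedding $\h^{s+\frac12}\hookrightarrow \ell^2$ (which holds because $s>-\frac12$), and the Lipschitz bound of Proposition~\ref{t:birkhoff-bdd} applied on the compact set $\mc{F}^*_T=\{u_j(t):j\geq 1,\ t\in[-T,T]\}$ supplied by Theorem~\ref{t:ap}. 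The factor of $n$ records the loss of one derivative; it is to be absorbed by the weight $n^{-2}$ defining $\h^{-1}$, leaving an extra $n^{-1}$ that couples with the uniform $\ell^2$-summability of the Birkhoff coordinates on $\mc{F}^*_T$.

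The data contribution to $\zeta_n(u_j(t)) - \zeta_n(u_k(t))$ splits as
\begin{equation*}
e^{i\Omega_n(t;u_j)}\bigl[\zeta_n(u_j(0)) - \zeta_n(u_k(0))\bigr] + \bigl[e^{i\Omega_n(t;u_j)} - e^{i\Omega_n(t;u_k)}\bigr]\zeta_n(u_k(0)).
\end{equation*}
The first piece contributes at most $\|\Phi(u_j(0))-\Phi(u_k(0))\|_{\h^{-1}} \leq \|\Phi(u_j(0))-\Phi(u_k(0))\|_{\h^{s+\frac12}}$, which by Proposition~\ref{t:birkhoff-bdd} is bounded by $\|u_j(0)-u_k(0)\|_{H^s}$. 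For the second piece, combining $|e^{i\alpha}-e^{i\beta}|\leq|\alpha-\beta|$ with the phase estimate above gives
\begin{equation*}
\sum_{n\geq 1}n^{-2}\,\bigl|e^{i\Omega_n(t;u_j)}-e^{i\Omega_n(t;u_k)}\bigr|^2\,|\zeta_n(u_k(0))|^2 \lesssim T_0^2\,\|u_j-u_k\|_{\mc{C}_{T_0}H^s}^2\sum_{n\geq 1}|\zeta_n(u_k(0))|^2,
\end{equation*}
and the final sum is uniformly bounded on $\mc{F}^*_T$ via $\h^{s+\frac12}\hookrightarrow\ell^2$.

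For the Duhamel integral, Minkowski's inequality reduces matters to controlling, for each $\tau$, the $\h^{-1}$ norm of the integrand difference. I would rewrite this difference as
\begin{equation*}
e^{i[\Omega_n(t;u_j)-\Omega_n(\tau;u_j)]}\bigl(\dd_{u_j(\tau)}\zeta_n[Au_j(\tau)] - \dd_{u_k(\tau)}\zeta_n[Au_k(\tau)]\bigr) + \bigl(e^{i\Delta_j(t,\tau)} - e^{i\Delta_k(t,\tau)}\bigr)\,\dd_{u_k(\tau)}\zeta_n[Au_k(\tau)],
\end{equation*}
with $\Delta_j(t,\tau) = \Omega_n(t;u_j)-\Omega_n(\tau;u_j)$ and similarly for $\Delta_k$. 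The first summand is controlled in $\h^{s+\frac12}$, and hence in $\h^{-1}$, by $\lesssim\|u_j(\tau)-u_k(\tau)\|_{H^s}$ using~\eqref{dzeta}, \eqref{dzeta 2}, and the boundedness of $A$ on $H^s$ from~\eqref{F}; subsequent $\tau$-integration produces the factor of $T_0$. The second summand is handled exactly as in the data step, now using the uniform $\ell^2$ bound on $(\dd_{u_k(\tau)}\zeta_n[Au_k(\tau)])_n$ supplied by~\eqref{dzeta} applied to $Au_k(\tau)\in H^s$.

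The main obstacle, as foreshadowed at the end of Section~\ref{s:s-pos}, is the linear-in-$n$ growth in the phase difference: for $s<0$ we have no better than $\h^{s+\frac12}$ control on the Birkhoff coordinates, which forces the crude inequality $\min(k,n)\leq n$. Passing to the weaker norm $\h^{-1}$ is precisely the device that makes this derivative loss affordable, while losing only one degree of regularity so that the tightness of the sequence of Birkhoff coordinates can later be invoked to upgrade back to $H^s$.
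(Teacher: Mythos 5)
Your proposal is correct and takes essentially the same approach as the paper: it subtracts the Duhamel formulas, proves the phase estimate $|\Omega_n(t;u_j)-\Omega_n(t;u_k)|\lesssim T_0\,n\,\|u_j-u_k\|_{\mc{C}_{T_0}H^s}$ via $\min\{k,n\}\leq n$ and the $\h^{s+\frac12}\hookrightarrow\ell^2$ embedding together with the uniform Lipschitz bounds on the compact set $\mc{F}^*_T$, splits both the data and integral terms into a phase-difference piece and a coefficient-difference piece, and absorbs the linear-in-$n$ loss with the $n^{-2}$ weight in $\h^{-1}$. The only superficial difference is which of $j,k$ carries the phase factor in the algebraic decomposition, which is immaterial.
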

\begin{proof}
We go back to the Duhamel formula~\eqref{duhamel}:
\begin{equation*}
\zeta_n(u(t)) = e^{i\Omega_n(t;u)} \zeta_n(u_0) +  \int_0^t e^{i[\Omega_n(t;u) - \Omega_n(\tau;u)]} \dd_{u(\tau)}\zeta_n [Au(\tau)]\, \dd\tau .
\end{equation*}
By \eqref{dzeta}, \eqref{dzeta 2}, and relative compactness of the family $\mc{F}_T^*$, we have the uniform bounds
\begin{align}
\| \Phi(u_j(t))\|_{\mc{C}_{T_0}\h^{s+\frac12}}
&\lesssim 1,
\label{unif 1} \\
\| \Phi(u_j(t)) - \Phi(u_k(t)) \|_{\mc{C}_{T_0}\h^{s+\frac12}}
&\lesssim \| u_j(t) - u_k(t) \|_{\mc{C}_{T_0}H^s} , 
\label{unif 2}\\
\| \dd_{u_j(t)}\Phi[f] \|_{\mc{C}_{T_0}\h^{s+\frac12}} 
&\lesssim \|f\|_{H^s} , 
\label{unif 3}\\
\| \dd_{u_j(t)}\Phi\, [f] - \dd_{u_k(t)}\Phi[f] \|_{\mc{C}_{T_0}\h^{s+\frac12}} 
&\lesssim \| u_j(t) - u_k(t) \|_{\mc{C}_{T_0}H^s} \| f\|_{H^s}
\label{unif 4}
\end{align}
for every $j,k\geq 1$ and $T_0\leq T$.

For $n\geq 1$, $j,k\geq 1$, and $|t|\leq T_0$, we estimate
\begin{align*}
\big| \tfrac1n [ \omega_n(u_j(t)) - &\omega_n(u_k(t)) ] \big|\\
&\lesssim \sum_{m\geq 0} \tfrac{\min\{m,n\}}{n} \Big| |\zeta_m(u_j(t))|^2 - |\zeta_m(u_k(t))|^2 \Big| \\
&\lesssim \sum_{m\geq 0} \Big| |\zeta_m(u_j(t))|^2 - |\zeta_m(u_k(t))|^2 \Big| \\
&\leq \big( \| \Phi(u_j(t)) \|_{\mc{C}_{T_0}\h^{0}} + \| \Phi(u_k(t)) \|_{\mc{C}_{T_0}\h^{0}} \big) \| \Phi(u_j(t)) - \Phi(u_k(t)) \|_{\mc{C}_{T_0}\h^{0}},
\end{align*}
so that the embedding $\h^0\hookrightarrow \h^{s+\frac 12}$ and the uniform bounds \eqref{unif 1}--\eqref{unif 2} lead to
\begin{align*}
\big| \tfrac1n [ \omega_n(u_j(t)) - \omega_n(u_k(t)) ] \big|
&\lesssim \| u_j(t) - u_k(t) \|_{\mc{C}_{T_0}H^s}.
\end{align*}
  As the function $\theta\mapsto e^{i\theta}$ is Lipschitz continuous, we have
\begin{align}
\| \tfrac{1}{n} [ e^{i\Omega_n(t;u_j)} - e^{i\Omega_n(t;u_k)} ] \|_{\mc{C}_{T_0}\ell^\infty_n}
&\lesssim \| \tfrac{1}{n} [ \Omega_n(t;u_j) - \Omega_n(t;u_k) ] \|_{\mc{C}_{T_0}\ell^\infty_n} \nonumber \\
&\lesssim T_0 \| \tfrac{1}{n} [ \omega_n(u_j(t)) - \omega_n^k(u_j(t)) ] \|_{\mc{C}_{T_0}\ell^\infty_n} \nonumber \\
&\lesssim T_0 \| u_j(t) - u_k(t) \|_{\mc{C}_{T_0}H^s} .
\label{lwp 4}
\end{align}

From \eqref{duhamel} we see that
\begin{align}
\zeta_n(&u_j(t)) - \zeta_n(u_k(t))
\nonumber \\
&= \big(e^{i\Omega_n(t;u_j)} -e^{i\Omega_n(t;u_k)}\big) \zeta_n(u_j(0))
\label{lwp 1} \\
&\phantom{{}={}}{}+ e^{i\Omega_n(t;u_k)} (\zeta_n(u_j(0))-\zeta_n(u_k(0)))
\label{lwp 1bis} \\
&\phantom{{}={}}{}+ \int_0^t \Big( e^{i(\Omega_n(t;u_j) - \Omega_n(\tau;u_j))} - e^{i(\Omega_n(t;u_k) - \Omega_n(\tau;u_k))} \Big)\dd_{u_j(\tau)}\zeta_n\, [Au_j(\tau)] \, \dd\tau 
\label{lwp 2} \\
&\phantom{{}={}}{}+ \int_0^t e^{i(\Omega_n(t;u_k) - \Omega_n(\tau;u_k))} \big( \dd_{u_j(\tau)}\zeta_n\, [Au_j(\tau)] - \dd_{u_k(\tau)}\zeta_n\, [Au_k(\tau)] \big) \, \dd\tau .
\label{lwp 3} 
\end{align}
We will estimate the four terms on the RHS individually.

For \eqref{lwp 1}, we use \eqref{lwp 4} to estimate
\begin{align*}
\| \eqref{lwp 1} \|_{\mc{C}_{T_0}\h^{-1}}
&\leq \| ( e^{i\Omega_n(t;u_j)} - e^{i\Omega_n(t;u_k)} ) \zeta_n(u_j(0)) \|_{\mc{C}_{T_0}\h^{-1}} \\
&\lesssim \big\| \tfrac1n (e^{i\Omega_n(t;u_j)} - e^{i\Omega_n(t;u_k)} ) \big\|_{\mc{C}_{T_0}\ell^\infty_n}  \| \zeta_n(u_j(0)) \|_{\mc{C}_{T_0}\h^0}  \\
&\lesssim T_0\| u_j(t) - u_k(t) \|_{\mc{C}_{T_0}H^s} .
\end{align*}
For \eqref{lwp 1bis}, we bound
\begin{align*}
\|\eqref{lwp 1bis}\|_{\mc{C}_{T_0}\h^{-1}}
&\leq \| e^{i\Omega_n(t;u_k)} (\zeta_n(u_j(0)) - \zeta_n(u_k(0)) )\|_{\mc{C}_{T_0}\h^{-1}}\\
&\leq  \| \zeta_n(u_j(0)) - \zeta_n(u_k(0)) \|_{\mc{C}_{T_0}\h^0}\\
&\lesssim \| u_j(0) - u_k(0) \|_{H^s} .
\end{align*}
For \eqref{lwp 2}, we use \eqref{lwp 4} and \eqref{unif 3} to obtain
\begin{align*}
\| \eqref{lwp 2} \|_{\mc{C}_{T_0}\h^{-1}}
& {}\lesssim T_0 \big\| \big( e^{i\Omega_n(t;u_j)} - e^{i\Omega_n(t;u_k)} \big)\dd_{u_j(t)}\zeta_n\, [Au_j(t)] \big\|_{\mc{C}_{T_0}\h^{-1}}  \\
&{}\lesssim T_0 \big\| \tfrac1n (e^{i\Omega_n(t;u_j)} - e^{i\Omega_n(t;u_k)} ) \big\|_{\mc{C}_{T_0}\ell^\infty_n}  \| \dd_{u_j(t)}\zeta_n\, [Au_j(t)] \|_{\mc{C}_{T_0}\h^{0}}  \\
&\lesssim T_0\| u_j(t) - u_k(t) \|_{\mc{C}_{T_0}H^s} .
\end{align*}
Finally, for \eqref{lwp 3} we also use \eqref{unif 4} to deduce
\begin{align*}
\| \eqref{lwp 3} \|_{\mc{C}_{T_0}\h^{-1}}
&{}\lesssim T_0 \big\| \dd_{u_j(t)}\zeta_n\, [Au_j(t)] - \dd_{u_k(t)}\zeta_n\, [Au_k(t)] \big\|_{\mc{C}_{T_0}\h^{-1}}  \\
& {}\lesssim T_0 \big\| \dd_{u_j(t)}\zeta_n\, [Au_j(t)] - \dd_{u_k(t)}\zeta_n\, [Au_j(t)] \big\|_{\mc{C}_{T_0}\h^{-1}}  \\
&\phantom{{}\lesssim} + T_0 \big\|  \dd_{u_k(t)}\zeta_n\, [Au_j(t) - Au_k(t)] \big\|_{\mc{C}_{T_0}\h^{-1}}  \\
&\lesssim T_0\| u_j(t) - u_k(t) \|_{\mc{C}_{T_0}H^s} .
\end{align*}
Adding the previous four estimates for \eqref{lwp 1}--\eqref{lwp 3} together we obtain \eqref{lwp 5}, as desired.
\end{proof}

\begin{proof}[Proof of Theorem~\ref{t:lwp-precise}]
We use the properties of the inverse map $\Phi^{-1}$ to estimate
\begin{equation*}
\| u_j(t) - u_k(t) \|_{\mc{C}_{T_0}H^s} \lesssim \| \Phi(u_j(t)) - \Phi(u_k(t)) \|_{\mc{C}_{T_0}\h^{s+\frac12}}.
\end{equation*}
For $N\geq 1$, 
it will be convenient for us to set
\begin{equation*}
\| z_n \|_{\h^r_{\geq N}} := \| \un_{n\geq N} z_n \|_{\h^r} .
\end{equation*}
With this notation in hand,
we cut between low and high frequencies:
\begin{equation*}
\| u_j(t) - u_k(t) \|_{\mc{C}_{T_0}H^s} 
    \lesssim N^{s+\frac32} \| \Phi(u_j(t)) - \Phi(u_k(t)) \|_{\mc{C}_{T_0}\h^{-1}} + 2\sup_{u\in\mc{F}^*_T}\,\| \Phi(u)\|_{\mc{C}_{T_0}\h^{s+\frac12}_{\geq N}} ,
\end{equation*}
then  we use Lemma~\ref{lem:low}:
\begin{multline*}
\| u_j(t) - u_k(t) \|_{\mc{C}_{T_0}H^s}\\ 
    \lesssim N^{s+\frac32} \big( \| u_j(0) - u_k(0) \|_{H^s} + T_0 \| u_j(t) - u_k(t) \|_{\mc{C}_{T_0}H^s} \big) + 2\sup_{u\in\mc{F}^*_T}\,\|\Phi(u)\|_{\h^{s+\frac12}_{\geq N}} .
\end{multline*}
Fix $\del>0$.  As the set \eqref{F* 2} is tight in $\h^{s+\frac12}$, we can pick $N\gg 1$ to make the second term on the RHS less than $\del>0$.  Then, we pick $T_0\ll 1$ to conclude that
\begin{equation*}
\| u_j(t) - u_k(t) \|_{\mc{C}_{T_0}H^s}
\lesssim N^{s+\frac32} \| u_j(0) - u_k(0) \|_{H^s} + \delta .
\end{equation*}
Sending $j,k\to\infty$, we conclude
\begin{equation*}
\limsup_{K\to\infty} \sup_{j,k\geq K} \| u_j(t) - u_k(t) \|_{\mc{C}_{T_0}H^s}
\lesssim \delta .
\end{equation*}
As $\delta>0$ was arbitrary, we conclude
\begin{equation*}
\| u_j(t) - u_k(t) \|_{\mc{C}_{T_0}H^s} \to 0 \quad\text{as }\min(j,k)\to\infty .
\end{equation*}

Altogether, this proves that the sequence $(u_j)_{j\geq 1}$ is a Cauchy sequence in the space $\mc{C}([-T_0,T_0],H^s(\T))$, and therefore it converges in $H^s$ uniformly for $|t|\leq T_0$.  As all of the implicit constants are uniform on the larger time interval $[-T,T]$ and the equation \eqref{ILW 2} is invariant under time translations, this ensures that the local-in-time argument may be iterated to cover $[-T,T]$.
\end{proof}

\begin{proof}[Proof of Theorem~\ref{t:wp 2}]
Recall that we already demonstrated local well-posedness for $s\geq 0$ in Section~\ref{s:s-pos}.  To finish the proof, it suffices to consider $-\frac12<s<0$.  We want to show that the solution map $S$ for equation \eqref{ILW 2} extends uniquely from $H^\infty_0$ to a jointly continuous map $S : \R\times H^s_0(\T) \to H^s_0(\T)$.

Given initial data $u(0)\in H^s_0(\T)$, we define $S(t,u(0))$ as follows.  Let $(u_j(0))_{j\geq 1}$ be a sequence of $H^\infty_0(\T)$ functions that converge to $u(0)$ in $H^s(\T)$.  Applying the previous theorem to this sequence, we see that the corresponding solutions $u_j(t)$ to \eqref{ILW 2} converge in $H^s(\T)$ and the limit is independent of the sequence $(u_j(0))_{j\geq 1}$.  Consequently, the solution map
\begin{equation*}
S(t,u(0)) := \lim_{j\to\infty} u_j(t)
\end{equation*}
is well-defined.

It remains to show that $S$ is jointly continuous.  Fix $T>0$ and let $(u_j(0))_{j\geq 1}$ be a sequence of initial data in $H^s_0(\T)$ that converges to $u(0)$ in $H^s(\T)$.  By definition of $S$, we may choose another sequence $\wt{u}_j(t)$ of smooth solutions to \eqref{ILW 2} such that
\begin{equation*}
\| S(t,u_j(0)) - \wt{u}_j(t)\|_{\mc{C}_TH^s} \to 0 \quad\text{as }j\to\infty .
\end{equation*}
In particular, $\wt{u}_j(0) \to u(0)$ in $H^s(\T)$, and so by the previous theorem we have
\begin{equation*}
\| \wt{u}_j(t) - S(t,u(0)) \|_{\mc{C}_TH^s} \to 0 \quad\text{as }j\to\infty .
\end{equation*}
Given a sequence $(t_j)_{j\geq 1} \subset [-T,T]$ that converges to some $t\in[-T,T]$, we estimate
\begin{align*}
&\| S(t_j,u_j(0)) - S(t,u(0)) \|_{H^s}\\
&\quad \leq \| S(t_j,u_j(0)) - \wt{u}_j(t_j) \|_{\mc{C}_TH^s} + \| \wt{u}_j(t_j) - \wt{u}_j(t) \|_{H^s} + \| \wt{u}_j(t) - S(t,u(0)) \|_{\mc{C}_TH^s} .
\end{align*}
The RHS converges to zero as $j\to\infty$, and so we conclude that $S$ is jointly continuous.
\end{proof}

\section{The ILW equation}\label{s:ILW}

The goal of this section is to prove Theorem~\ref{t:conv}.  Fix $-\frac12 < s<0$ and initial data $u(0) \in H^s_0(\T)$, and consider the global solutions $u^\del(t)$ of \eqref{ILW} and $u^\infty(t)$ of \eqref{BO}.  We want to show that $u^\del$ converges to $u^\infty$ in $\mc{C}([-T,T];H^s(\T))$ for any $T>0$.

Recall that \eqref{ILW} can be put the form~\eqref{ILW 2} for the operator $A$ appearing in \eqref{F:ILW}.  Moreover, this operator $A$ satisfies the estimate \eqref{smoothing 2}.

First, we will prove that the sequence $(\Phi(u^\del))_{\del\geq 1}$ is tight in $\h^{s+\frac12}$:
\begin{lemma}[Tightness]
For any $T>0$, we have
\begin{equation}
 \sup_{1\leq \del\leq \infty}\, \| \Phi(u^\del(t) )\|_{\mc{C}_T\h^{s+\frac12}_{\geq N}}  \to 0 \quad\text{as }N\to\infty.
\label{tight}
\end{equation}
\end{lemma}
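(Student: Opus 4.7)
The plan is to transport the a-priori relative compactness of the ILW orbits in $H^s(\T)$, already provided by Theorem~\ref{t:ap ILW}, through the continuous Birkhoff map $\Phi : H^s_0(\T) \to \h^{s+\frac 12}$. Tightness of the family $\{\Phi(u^\del(t))\}$ in $\h^{s+\frac 12}$ is equivalent to relative compactness of its image, so it suffices to check that the full family $\{u^\del(t) : \del\in[1,\infty],\ t\in[-T,T]\}$ is relatively compact in $H^s(\T)$.

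To carry this out, I would first approximate $u(0)$ by smooth data $(u_j(0))_{j\geq 1} \subset H^\infty_0(\T)$ converging in $H^s(\T)$ to $u(0)$. Theorem~\ref{t:ap ILW} applied to the relatively compact set $\mc{F} = \{u_j(0) : j\geq 1\}$ shows that
\begin{equation*}
\{u_j^\del(t) : j\geq 1,\ \del\geq 1,\ t\in[-T,T]\}
\end{equation*}
is relatively compact in $H^s(\T)$; Theorem~\ref{t:ap} applied with $A\equiv 0$ covers the endpoint $\del=\infty$ (i.e.\ the BO flow) in the same way. By global well-posedness (Theorem~\ref{t:wp 2}, together with its analog for BO), for each fixed $\del\in[1,\infty]$ we have $u_j^\del(t) \to u^\del(t)$ in $\mc{C}_T H^s$ as $j\to\infty$, and so each $u^\del(t)$ lies in the closure of the above relatively compact set. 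Consequently $\{u^\del(t) : \del\in[1,\infty],\ t\in[-T,T]\}$ is itself relatively compact in $H^s(\T)$, and applying the continuous map $\Phi$ yields relative compactness in $\h^{s+\frac 12}$, which is precisely the boundedness-plus-tightness statement \eqref{tight}.

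The key point to verify is that all of the constants remain uniform in $\del\geq 1$. This is already built into Theorem~\ref{t:ap ILW}: after the boost \eqref{boost}, the symbol of the effective $A$ satisfies $\|a\|_{\ell^\infty}\lesssim 1$ uniformly for $\del\geq 1$ thanks to \eqref{smoothing 2}, so the exponential growth constant in the a-priori estimate does not degenerate as $\del\to\infty$. The limiting case $\del=\infty$ corresponds to $A\equiv 0$, for which $\beta_s$ is exactly conserved and the argument is trivial. Since this is the only uniformity check required, no serious obstacle appears and the proof reduces to the three-step chain above.
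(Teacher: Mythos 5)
Your proof is correct and takes essentially the same route as the paper: transport relative compactness of the $H^s$ orbits through the continuous Birkhoff map to obtain tightness in $\h^{s+\frac12}$. The paper's proof is shorter in two places where yours is more explicit. First, the paper handles $\del=\infty$ directly by observing that $|\zeta_n(u^\infty(t))|^2 = |\zeta_n(u(0))|^2$ for all $t$, so the range of $t\mapsto\Phi(u^\infty(t))$ lies on a torus determined by the single fixed sequence $\Phi(u(0))$ and is automatically tight; you instead subsume this case by applying Theorem~\ref{t:ap} with $A\equiv 0$, which also works but is heavier machinery for that endpoint. Second, you explicitly spell out the approximation by smooth data $u_j(0)\to u(0)$ and appeal to well-posedness (Theorem~\ref{t:lwp-precise}) to place $u^\del(t)$ in the closure of the relatively compact set $\{u_j^\del(t)\}$; the paper applies Theorem~\ref{t:ap ILW} directly without remarking that $u(0)$ need not be smooth, implicitly relying on the same density argument. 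Your version is therefore slightly more careful on that hypothesis, but the mathematical content is identical.
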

\begin{proof}
The statement for $\del=\infty$ is immediate, since by \eqref{zeta dot 3} we have
\begin{equation*}
|\zeta_n(u^\infty(t))|^2 = |\zeta_n(u(0))|^2 \quad\text{for all }t\in\R
\end{equation*}
and the initial data $\Phi(u(0))$ is a fixed sequence in $\h^{s+\frac12}$.

By Theorem \ref{t:ap ILW}, we know that the family of solutions $u^\del(t)$ for $|t|\leq T$ and $\del\geq 1$ is relatively compact in $H^s(\T)$.  Therefore, by continuity of the Birkhoff map $\Phi$, we know that the sequences $\Phi(u^{\del}(t))$ for $|t|\leq T$ and $\del\geq 1$ are relatively compact in $\h^{s+\frac12}$, and thus are tight.
\end{proof}

We recall that in~\eqref{boost}, we made the change of functions $v^\del(t,x)=u^\del(t,x+\delta^{-1}t)$ in order to fit the framework of Definition~\ref{def:A}.  In view of the following lemma, we also set $v^\infty = u^\infty$.
\begin{lemma}
If the sequence $(v^\del)_\del$ converges to $u^\infty$ in $\mc{C}([-T,T],H^s(\T))$ as $\del\to+\infty$, then so does the sequence $(u^\del)_\del$.
\end{lemma}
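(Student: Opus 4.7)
The plan is to undo the change of variables $v^\del(t,x)=u^\del(t,x+\delta^{-1}t)$, which gives $u^\del(t,x)=v^\del(t,x-\delta^{-1}t)$, and then split the difference with $u^\infty(t,x)$ via the triangle inequality in the form
\begin{equation*}
u^\del(t,x)-u^\infty(t,x)
=\bigl[v^\del(t,x-\delta^{-1}t)-u^\infty(t,x-\delta^{-1}t)\bigr]
+\bigl[u^\infty(t,x-\delta^{-1}t)-u^\infty(t,x)\bigr].
\end{equation*}
The goal will be to show that each bracketed term tends to zero in $H^s(\T)$, uniformly for $t\in[-T,T]$.

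The first bracket is handled by the hypothesis: translations are isometries of $H^s(\T)$, so
\begin{equation*}
\|v^\del(t,\cdot-\delta^{-1}t)-u^\infty(t,\cdot-\delta^{-1}t)\|_{H^s}
=\|v^\del(t)-u^\infty(t)\|_{H^s}
\to 0
\end{equation*}
uniformly on $[-T,T]$ by assumption.

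For the second bracket I would use equicontinuity of translations on a compact set in $H^s(\T)$. The set $\{u^\infty(t):t\in[-T,T]\}$ is the continuous image of the compact interval $[-T,T]$ under the \eqref{BO} flow (which is continuous in $H^s$ by Proposition~\ref{t:birkhoff-bdd} and~\eqref{zeta dot 3}), hence compact in $H^s(\T)$. Relative compactness in $H^s(\T)$ is equivalent to equicontinuity in the sense of \eqref{equicty 2}, so
\begin{equation*}
\sup_{t\in[-T,T]}\,\sup_{|y|<\eta}\|u^\infty(t,\cdot-y)-u^\infty(t,\cdot)\|_{H^s}
\xrightarrow[\eta\to 0]{}0.
\end{equation*}
Since $|\delta^{-1}t|\leq T/\delta\to 0$ as $\delta\to\infty$, this yields the required uniform bound on the second bracket and completes the proof.

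There is no real obstacle here; the only point worth noting is that the uniformity in $t$ of the shift estimate is supplied precisely by equicontinuity of the compact orbit $\{u^\infty(t):t\in[-T,T]\}$, the same mechanism that was central in Section~\ref{s:WP}.
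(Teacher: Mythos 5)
Your proof is correct, and it takes a genuinely different route from the paper's. The paper works on $\| u^\del - v^\del\|_{H^s}$ directly, splitting it into low and high Fourier modes: the high modes are handled by equicontinuity of the full family $\{u^\del(t)\}_{\del\geq 1,\, |t|\leq T}$ coming from Theorem~\ref{t:ap ILW} (noting $v^\del$ and $u^\del$ have identical Fourier magnitudes), while the low modes converge coefficient-by-coefficient since $\wh{v^\del}(n) = e^{in\delta^{-1}t}\wh{u^\del}(n)$ with $\wh{u^\del}(n)$ uniformly bounded. You instead decompose $u^\del - u^\infty$ into a ``hypothesis'' piece, controlled by the translation invariance of the $H^s$ norm together with the assumed convergence of $v^\del$, and a ``shift'' piece $u^\infty(t,\cdot-\delta^{-1}t) - u^\infty(t,\cdot)$, controlled by equicontinuity of the single orbit $\{u^\infty(t):t\in[-T,T]\}$. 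A modest advantage of your version is that it avoids invoking Theorem~\ref{t:ap ILW}: the compactness (hence equicontinuity in the sense of \eqref{equicty 2}) of the $u^\infty$ orbit is an immediate consequence of continuity of the \eqref{BO} flow, whereas the paper's argument relies on the uniform-in-$\del$ a-priori estimates of Section~\ref{s:a-priori}. Both proofs ultimately rest on the same compactness/equicontinuity mechanism, just applied to different objects.
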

\begin{proof}
Thanks to Theorem~\ref{t:ap ILW}, we know that the sequence $(u^\del)_{\del\geq 1}$ is equicontinuous in $H^s(\T)$. 
For $N\geq 1$, we estimate
\begin{equation*}
\| u^\del - v^\del \|_{H^s} \leq \sup_{|n|\leq N} | \wh{u^\del}(n) - \wh{v^\del}(n) | + 2\,\sup_{\del\geq 1}\| u^\del \|_{H_{\geq N}^s} .
\end{equation*}
By equicontinuity, we may pick $N\gg 1$ to make the second term on the RHS above arbitrarily small.
It is therefore enough to show that for each $n\in \Z$, the Fourier coefficient with index $n$ is convergent: $\wh{u^{\del}}(n) - \wh{v^\del}(n) \to 0$ as $\delta\to+\infty$ uniformly for $|t|\leq T$. This latter property is true by definition of $v^\del$.
\end{proof}

Note that the sequence $(v^\del)_\del$ also satisfies the equicontinuity property. Therefore, one can split the study between low and high Birkhoff frequencies.

For the low-frequency contribution, it will suffice to prove that the sequence $(\Phi(v^\delta))_{\delta}$ converges at the lower regularity~$\h^{-1}$:
\begin{lemma}
For any $T>0$, we have
\begin{equation}
\| \Phi(v^\delta(t)) - \Phi(u^\infty(t)) \|_{\mc{C}_T\h^{-1}}\to 0 \quad\text{as }\del\to\infty.
\label{low reg}
\end{equation}
\end{lemma}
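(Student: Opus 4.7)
The starting point is the Duhamel formula \eqref{duhamel} applied to $v^\del$, which satisfies \eqref{ILW 2} with the operator $A+\del^{-1}\partial_x$ (after the change of variables \eqref{boost}, with $A$ from \eqref{F:ILW}). Subtracting the explicit BO evolution $\zeta_n(u^\infty(t))=e^{it\omega_n(u(0))}\zeta_n(u(0))$ and using $v^\del(0)=u(0)$ yields
\begin{equation*}
\zeta_n(v^\del(t))-\zeta_n(u^\infty(t)) = \bigl(e^{i\Omega_n(t;v^\del)}-e^{it\omega_n(u(0))}\bigr)\zeta_n(u(0)) + I_n^\del(t),
\end{equation*}
where $I_n^\del(t)=\int_0^t e^{i[\Omega_n(t;v^\del)-\Omega_n(\tau;v^\del)]}\,\dd_{v^\del(\tau)}\zeta_n[(A+\del^{-1}\partial_x)v^\del(\tau)]\,\dd\tau$. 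The remainder $I^\del$ is small: combining \eqref{dzeta} on the relatively compact family from Theorem~\ref{t:ap ILW} with the smoothing estimate \eqref{smoothing 2} and the uniform bound $\|v^\del\|_{\mc{C}_T H^s}\lesssim 1$ gives $\|I^\del\|_{\mc{C}_T\h^{s+\frac12}}\lesssim T\del^{-2}$, and the embedding $\h^{s+\frac12}\hookrightarrow\h^{-1}$ (valid for $s>-\frac12$) then yields $\|I^\del\|_{\mc{C}_T\h^{-1}}\to 0$ as $\del\to\infty$.

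To handle the phase term, I would show that for each fixed $n\geq 1$, $\sup_{|t|\leq T}\bigl|e^{i\Omega_n(t;v^\del)}-e^{it\omega_n(u(0))}\bigr|\to 0$ as $\del\to\infty$. Using $|e^{i\theta_1}-e^{i\theta_2}|\leq|\theta_1-\theta_2|$, \eqref{eq:omega_n}, and $|\zeta_k(u^\infty(\tau))|^2=|\zeta_k(u(0))|^2$, this reduces to estimating
\begin{equation*}
\sum_{k\geq 1}\min(k,n)\bigl||\zeta_k(v^\del(\tau))|^2 - |\zeta_k(u(0))|^2\bigr|.
\end{equation*}
I would split at a cutoff $K$: the tail $k>K$ is controlled via $\min(k,n)\leq n$ and the tightness lemma \eqref{tight}, which gives $\sum_{k>K}|\zeta_k(v^\del(\tau))|^2\leq K^{-(2s+1)}\|\Phi(v^\del)\|^2_{\mc{C}_T\h^{s+\frac12}_{\geq K}}\to 0$ uniformly in $\del$ and $t$ as $K\to\infty$, and similarly for $u^\infty$. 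For the finite sum $k\leq K$, differentiating \eqref{zeta dot} gives
\begin{equation*}
\tfrac{\dd}{\dd\tau}|\zeta_k(v^\del(\tau))|^2 = 2\Re\bigl(\overline{\zeta_k(v^\del)}\,\dd_{v^\del}\zeta_k[(A+\del^{-1}\partial_x)v^\del]\bigr),
\end{equation*}
whose size is $O(k^{-(2s+1)}\del^{-2})$: by the weighted structure of $\h^{s+\frac12}$, $|\zeta_k(v^\del)|\lesssim k^{-(s+\frac12)}$, and by \eqref{dzeta} and \eqref{smoothing 2}, $|\dd_{v^\del}\zeta_k[(A+\del^{-1}\partial_x)v^\del]|\lesssim k^{-(s+\frac12)}\del^{-2}$. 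Integrating in $\tau$ and using $v^\del(0)=u(0)$ shows that for each fixed $k$, $\sup_\tau\bigl||\zeta_k(v^\del(\tau))|^2-|\zeta_k(u(0))|^2\bigr|\to 0$ as $\del\to\infty$, so the finite sum also vanishes in the limit.

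Finally, I would conclude via dominated convergence. Taking the $\h^{-1}$-norm at each $t$ and then the sup over $t$ yields
\begin{equation*}
\sup_{|t|\leq T}\bigl\|(e^{i\Omega_n(t;v^\del)}-e^{it\omega_n(u(0))})\zeta_n(u(0))\bigr\|_{\h^{-1}}^2 \leq \sum_{n\geq 1} n^{-2}\,\sup_{|t|\leq T}\bigl|e^{i\Omega_n(t;v^\del)}-e^{it\omega_n(u(0))}\bigr|^2\,|\zeta_n(u(0))|^2.
\end{equation*}
Each summand tends to zero by the pointwise analysis above, and the sequence is dominated by the fixed summable sequence $4n^{-2}|\zeta_n(u(0))|^2$, which is finite since $\Phi(u(0))\in\h^{s+\frac12}\hookrightarrow\h^0$ for $s>-\frac12$. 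Combined with the bound on $I^\del$, this establishes \eqref{low reg}. The main obstacle is the phase analysis: a direct Gronwall attempt in $\h^{-1}$ would produce $K$-dependent feedback terms that could only be absorbed on very short time intervals and would fail to close over $[-T,T]$; the present two-scale approach (tightness for high Birkhoff frequencies plus explicit smoothing at each fixed low frequency) circumvents this by reducing everything to pointwise-in-$n$ convergence before summing.
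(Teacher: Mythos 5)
Your proof is correct, but it takes a noticeably more circuitous route than the paper, and misses the key simplification that makes the paper's argument quantitative. The paper controls the phase discrepancy by observing that the full sum
\begin{equation*}
\sum_{k\geq 0} \Big| |\zeta_k(v^\del(t))|^2 - |\zeta_k(u(0))|^2 \Big|
\end{equation*}
is directly bounded by $T\del^{-2}$, \emph{uniformly in $n$}: after differentiating $|\zeta_k|^2$ in time via \eqref{zeta dot 2}, one applies Cauchy--Schwarz in $k$ to get the product $\|\Phi(v^\del)\|_{\mc{C}_T\h^0}\,\|\dd_{v^\del}\Phi[(A+\del^{-1}\partial_x)v^\del]\|_{\mc{C}_T\h^0}$, and since $s+\tfrac12>0$ the embedding $\h^{s+\frac12}\hookrightarrow\h^0$ converts this into the a-priori bound plus the smoothing estimate \eqref{smoothing 2}. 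This yields $\big|\tfrac1n[\omega_n(v^\del(t))-\omega_n(u^\infty(t))]\big|\lesssim T\del^{-2}$ for all $n$ at once, hence $\|\eqref{low reg 1}\|_{\mc{C}_T\h^{-1}}\lesssim T^2\del^{-2}$, a quantitative rate. Your approach instead fixes $n$, introduces a frequency cutoff $K$ in the sum over $k$, invokes the tightness lemma \eqref{tight} for the tail, does a per-mode smoothing estimate for the finite part, and closes with dominated convergence. That works, but it produces only qualitative convergence (no rate), and it imports the tightness lemma into a step where the paper does not need it (tightness is only used in the final upgrade to $\h^{s+\frac12}$ in the proof of Theorem~\ref{t:conv}). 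Your closing remark about Gronwall feedback is also somewhat off target: the paper's proof of this lemma does not involve Gronwall at all; the absence of a loss here comes precisely from the trivial bound $\min\{k,n\}/n\leq 1$ combined with the $\ell^2$-summability of the Birkhoff modes.
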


\begin{proof}
By Theorems \ref{t:ap} and \ref{t:ap ILW}, we know that the functions $\Phi(v^\delta(t))$, $\Phi(u^\infty(t))$ and their differentials are bounded in $\h^{s+\frac12}$, uniformly for $|t|\leq T$ and $\delta\geq 1$. In the following, we will allow the implicit constants to depend on this upper bound, and on the regularity exponent $s$.

Using \eqref{duhamel}, we write for $t\in[-T,T]$ and $n\geq 1$,
\begin{align}
&\zeta_n(v^\del(t)) - \zeta_n(u^\infty(t)) 
\nonumber \\
&\qquad= ( e^{i\Omega_n(t;v^\del)} - e^{i\Omega_n(t;u^\infty)} ) \zeta_n(u(0)) 
\label{low reg 1} \\
&\qquad\phantom{{}={}}{}+ \int_0^t e^{i(\Omega_n(t;v^\del) - \Omega_n(\tau;v^\del))} \dd_{v^\del(\tau)}\zeta_n [(A+\delta^{-1}\partial_x)v^\del(\tau)]\, \dd\tau.
\label{low reg 2} 
\end{align}
We will estimate the two terms on the RHS individually.

Let us start with the term \eqref{low reg 2}.  Using \eqref{dzeta} and \eqref{smoothing 2}, we estimate
\begin{equation*}
\| \eqref{low reg 2} \|_{\mc{C}_T\h^{-1}}
\leq \| \eqref{low reg 2} \|_{\mc{C}_T\h^{s+\frac12}}
\leq CT \| (A+\delta^{-1}\partial_x)v^\delta \|_{\mc{C}_T H^s} \lesssim T \del^{-2} .
\end{equation*}
The upper bound above converges to zero as $\del\to\infty$.

Next, we turn to \eqref{low reg 1}.  As the function $\theta\mapsto e^{i\theta}$ is Lipschitz continuous, we have
\begin{align*}
\| \eqref{low reg 1} \|_{\mc{C}_T\h^{-1}}
&\lesssim \| \tfrac{1}{n} ( \Omega_n(t;v^\del) - \Omega_n(t;u^\infty) ) \|_{\mc{C}_T\ell^\infty_n} \| \zeta_n(0) \|_{\h^0} \\
&\lesssim T \| \tfrac{1}{n} ( \omega_n(v^\del(t)) - \omega_n(u^\infty(t)) ) \|_{\mc{C}_T\ell^\infty_n} .
\end{align*}
By definition of $\omega_n$,
\begin{align*}
\big| \tfrac{1}{n} ( \omega_n(v^\del(t)) - \omega_n(u^\infty(t)) )   \big|
&\leq \sum_{k\geq 0} \tfrac{\min\{k,n\}}{n} \Big| |\zeta_k(v^\del(t))|^2 - |\zeta_k(u(0))|^2 \Big| \\
&\leq \sum_{k\geq 0} \Big| |\zeta_k(v^\del(t))|^2 - |\zeta_k(u(0))|^2 \Big| .
\end{align*}
However, from \eqref{zeta dot} we see that
\begin{equation}
\frac{\dd}{\dd t} |\zeta_k(v^\del(t))|^2
= 2 \Re\big\{ \ol{\zeta_k(v^\del)}\, \dd_{v^\del}\zeta_k [(A+\delta^{-1}\partial_x)v^\del] \big\} ,
\label{zeta dot 2} 
\end{equation}
and so
\begin{align*}
\sum_{k\geq 0} \Big| |\zeta_k(v^\del(t))|^2 - |\zeta_k(u(0))|^2 \Big|
&\lesssim T \bigg\| \sum_{k\geq 0} | \zeta_k(v^\del)| \, \big| \dd_{v^\del}\zeta_k[(A+\delta^{-1}\partial_x)v^\del] \big| \bigg\|_{\mc{C}_T} \\
&\lesssim T \| \Phi(v^\del) \|_{\mc{C}_T\h^0} \| \dd_{v^\del}\Phi[(A+\delta^{-1}\partial_x)v^\del]\|_{\mc{C}_T\h^0} \\
&\lesssim T \| \Phi(v^\del) \|_{\mc{C}_T\h^{s+\frac12}}  \| (A+\delta^{-1}\partial_x)v^\del \|_{\mc{C}_T H^s}.
\end{align*}
Thanks to the uniform bounds on $\Phi(v^\del)$ and~\eqref{smoothing 2}, we obtain
\begin{equation*}
\sum_{k\geq 0} \Big| |\zeta_k(v^\del(t))|^2 - |\zeta_k(u(0))|^2 \Big|
\lesssim T \del^{-2} .
\end{equation*}

Altogether, we conclude
\begin{equation*}
\| \eqref{low reg 1} \|_{\mc{C}_T\h^{-1}} \lesssim T^2\del^{-2} .
\end{equation*}
The RHS converges to zero as $\del\to\infty$, and so completes the proof.
\end{proof}

We are now prepared to upgrade this convergence to the desired regularity and prove our main result:
\begin{proof}[Proof of Theorem~\ref{t:conv}]
Fix $T>0$.  As the restriction of $\Phi$ to the set $\{ v^\del(t) : t\in[-T,T]$, $\del\geq 1\}$ is bi-Lipschitz continuous from $H^s_0(\T)$ into $\h^{s+\frac12}$, 
it suffices to show
\begin{equation}
\| \Phi(v^\del(t)) - \Phi(u^\infty(t)) \|_{\mc{C}_T\h^{s+\frac12}} \to 0 \quad\text{as }\del\to\infty .
\label{conv}
\end{equation}

For $N\geq 1$, we bound
\begin{multline*}
\| \Phi(v^\del) - \Phi(u^\infty ) \|_{\mc{C}_T\h^{s+\frac12}}
\lesssim N^{s+\frac32} \| \Phi(v^\del) - \Phi(u^\infty) \|_{\mc{C}_T\h^{-1}} + \sup_{1\leq\del\leq\infty} \| \Phi(v^\del )\|_{\mc{C}_T\h^{s+\frac12}_{\geq N}} .
\end{multline*}
By \eqref{tight}, we may choose $N\gg 1$ to make the second term on the RHS arbitrarily small.  Once $N$ is fixed, we then know that the first term converges to zero as $\del\to\infty$ by \eqref{low reg}.  This proves \eqref{conv}, and so completes the proof of Theorem~\ref{t:conv}.
\end{proof}

\bibliography{refs}
\end{document}